\def\Z{\mathbb{Z}}
\def\C{\mathbb{C}}
\DeclareMathOperator{\re}{Re}
\newcommand{\pfrac}[2]{\left(\frac{#1}{#2}\right)}
\renewcommand{\(}{\left(}
\renewcommand{\)}{\right)}
\newtheorem{theorem}{Theorem}[section]
\newtheorem{lemma}[theorem]{Lemma}
\newtheorem{corollary}[theorem]{Corollary}
\newtheorem{proposition}[theorem]{Proposition}
\theoremstyle{remark}
\renewcommand{\(}{\left(}
\renewcommand{\)}{\right)}
\def\Z{\mathbb{Z}}
\numberwithin{equation}{section}
\begin{document}


\title{Dissections of strange $q$-series}
\author{Scott Ahlgren, Byungchan Kim, and Jeremy Lovejoy}

\address{Department of Mathematics\\
University of Illinois\\
Urbana, IL 61801} 
\email{sahlgren@illinois.edu}

\address{School of Liberal Arts \\ Seoul National University of Science and Technology \\ 232 Gongneung-ro, Nowongu, Seoul, 01811, Republic of  Korea}
\email{bkim4@seoultech.ac.kr}

\address{Current Address: Department of Mathematics, University of California, Berkeley, 970 Evans Hall \#3780,
Berkeley, CA 94720-3840, USA }

\address{Permanent Address: CNRS, Universit{\'e} Denis Diderot - Paris 7, Case 7014, 75205 Paris Cedex 13, FRANCE}
\email{lovejoy@math.cnrs.fr}

\dedicatory{Dedicated to George E. Andrews on his $80$th birthday}
\date\today
\thanks{The first author was  supported by a grant from the Simons Foundation (\#426145 to Scott Ahlgren). Byungchan Kim was supported by the Basic Science Research Program through the National Research Foundation of Korea (NRF) funded by the Ministry of Education (NRF-2016R1D1A1A09917344)}

\begin{abstract} 
In a study of congruences for the Fishburn numbers, Andrews and Sellers observed empirically that certain polynomials appearing in the dissections of the partial sums of the Kontsevich-Zagier series are divisible by a certain $q$-factorial.    This was proved by the first two authors. In this paper we extend this strong divisibility property to two generic families of $q$-hypergeometric series which, like the Kontsevich-Zagier series, agree asymptotically with partial theta functions.   
\end{abstract}

\subjclass[2010]{Primary 33D15} 

\keywords{Fishburn numbers, Kontsevich-Zagier strange function, $q$-series, partial theta functions, congruences}



\maketitle


\section{Introduction}
Recall the usual $q$-series notation
\begin{equation} \label{qfac}
(a;q)_n := (1-a)(1-aq) \cdots (1-aq^{n-1}),
\end{equation}
and let $\mathcal{F}(q)$ denote the Kontsevich-Zagier ``strange'' function \cite{Zag,Zag2},
\begin{equation*}
\mathcal{F}(q) := \sum_{n \geq 0} (q;q)_n.
\end{equation*}
This series does not converge on any open subset of $\mathbb{C}$, but it is well-defined both at roots of unity and   
as a power series when $q$ is replaced by $1-q$.   The coefficients $\xi(n)$ of 
$$
\mathcal{F}(1-q)  = 1+q+2q^2+5q^3 + 15q^4 + 53q^5 + \cdots 
$$
are called the Fishburn numbers, and they count a number of different combinatorial objects (see  \cite{OEIS} for references).

Andrews and Sellers \cite{AS} discovered and proved a wealth of congruences for $\xi(n)$ modulo primes $p$. 
 For example, we have 
\begin{equation} \label{xicongex}
\begin{aligned} 
\xi(5n+4) \equiv \xi(5n+3) &\equiv 0 \pmod{5}, \\
\xi(7n+6) &\equiv 0 \pmod{7}.
\end{aligned}
\end{equation}
In subsequent work of the first two authors, Garvan,  and Straub \cite{AK, Gar, Straub},
similar congruences were obtained for prime powers and for generalized Fishburn numbers.

Taking a different approach, 
Guerzhoy, Kent, and Rolen \cite{GKR} interpreted the coefficients in the asymptotic expansions of  functions $P_{a,b,\chi}^{(1)}(e^{-t})$ defined in \eqref{eq:Pdef} below
in terms of special values of $L$-functions, and  proved congruences for these coefficients using divisibility properties of binomial coefficients.
These congruences are inherited by any function whose expansion at $q=1$ agrees with one of these expansions; these include the function $\mathcal F(q)$ and, 
more generally, the Kontsevich-Zagier functions described in Section~\ref{sec:ex} below.  See \cite{GKR} for details.

Although the  congruences \eqref{xicongex}  bear a passing  resemblance to Ramanujan's congruences for the partition function $p(n)$,
it turns out that they arise from a divisibility property of the partial sums of $\mathcal F(q)$.
For positive integers $N$ and $s$ consider the partial sums 
\begin{equation*}
\mathcal{F}(q;N) := \sum_{n =0}^N (q;q)_n
\end{equation*}
and the $s$-dissection
\begin{equation*}
\mathcal{F}(q;N) = \sum_{i=0}^{s-1} q^iA_s(N,i,q^s).
\end{equation*}
Let  $S(s) \subseteq \{0,1,\dots s-1 \}$ denote the set of reductions modulo $s$ of the set of pentagonal numbers $m(3m+1)/2$, where $m \in \mathbb{Z}$. The key step in the proof of Andrews and Sellers is to show that if $p$ is prime and  $i \not \in S(p)$ then we have
\begin{equation} \label{ASdivis}
(1-q)^n \mid A_p(pn-1,i,q).
\end{equation}
This divisibility property is also important for the proof of the congruences in \cite{Gar,Straub}.
Andrews and Sellers \cite{AS} observed empirically that  $(1-q)^n$ can be strengthened to $(q;q)_n$ in \eqref{ASdivis}. 
The first two authors showed that this divisibility property holds for any $s$.
To be precise, define
\begin{equation}\label{eq:lambdadef}
\lambda(N,s) =\Big \lfloor \frac{N+1}{s} \Big \rfloor.
\end{equation}
Then we have
\begin{theorem}[\cite{AK}] \label{AKthm}
Suppose that $s$ and $N$ are positive integers and  that $i \not \in S(s)$.  Then 
\begin{equation} \label{AKdivis}
(q;q)_{\lambda(N,s)} \mid A_s(N,i,q).
\end{equation}
\end{theorem}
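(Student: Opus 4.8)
The plan is to reduce to the family $N=sM-1$, recast \eqref{AKdivis} as vanishing to a prescribed order at roots of unity, and then induct on $M$, the initial order of vanishing being supplied by Zagier's strange identity. For the reduction: if $n\ge sM$ then the polynomial $(q;q)_n=\prod_{k=1}^{n}(1-q^k)$ is divisible in $\mathbb{Z}[q]$ by $\prod_{\substack{1\le k\le n\\ s\mid k}}(1-q^k)=(q^s;q^s)_{\lfloor n/s\rfloor}$, hence by $(q^s;q^s)_M$; and divisibility by a polynomial in $q^s$ is inherited by every component of an $s$-dissection, so the tail $\sum_{sM\le n\le N}(q;q)_n$ contributes to each $A_s(N,i,q^s)$ a multiple of $(q^s;q^s)_M$. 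Since $\lambda(N,s)=\lambda(sM-1,s)=M$ whenever $N\ge sM-1$, it suffices to treat $N=sM-1$. Substituting $q\mapsto q^s$ and writing $Q=q^s$, the task becomes: for $i\notin S(s)$, the polynomial $B_{M,i}(Q):=A_s(sM-1,i,Q)\in\mathbb{Z}[Q]$ is divisible by $(Q;Q)_M=\prod_{k=1}^{M}(1-Q^k)$; equivalently, for every $d\le M$, $B_{M,i}$ vanishes at each primitive $d$-th root of unity to order at least $\lfloor M/d\rfloor$.

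The pentagonal numbers enter through the order-one vanishing. A discrete Fourier transform over the $s$-th roots of $\omega$ gives, for $\omega$ of order $d$,
\[
B_{M,i}(\omega)=\frac1s\sum_{\zeta^{s}=\omega}\zeta^{-i}\,\mathcal F(\zeta;sM-1),
\]
and when $d\le M$ one has $\mathcal F(\zeta;sM-1)=\mathcal F(\zeta)$ for every such $\zeta$, since $(\zeta;\zeta)_n=0$ once $n$ reaches the order of $\zeta$, which divides $sd\le sM$. By Zagier's strange identity, $\mathcal F$ evaluated at roots of unity is, in the appropriate sense, a partial theta function whose exponents are the (generalized) pentagonal numbers; in particular the Fourier coefficients $\sum_{\zeta^s=\omega}\zeta^{-i}\mathcal F(\zeta)$ are supported on the residue classes in $S(s)$, and so vanish for $i\notin S(s)$. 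This settles the cases $d\le M<2d$, where $\lfloor M/d\rfloor=1$ (the case $d=1$, $M=1$ being essentially the observation behind the Andrews--Sellers congruences).

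For $d\le M/2$ one needs order of vanishing at least $\lfloor M/d\rfloor\ge2$, and here I would induct on $M$ via the one-step recursion
\[
\mathcal F(q;sM-1)=\mathcal F(q;s(M-1)-1)+(q^s;q^s)_{M-1}\,D_{M-1}(q)\,G_{M-1}(q),
\]
where $D_{M-1}(q)=\prod_{\substack{1\le l\le s(M-1)\\ s\nmid l}}(1-q^l)$ and $G_{M-1}(q)=\sum_{r=0}^{s-1}(q^{s(M-1)+1};q)_r$; on the residue-$i$ dissection component this reads $B_{M,i}(Q)=B_{M-1,i}(Q)+(Q;Q)_{M-1}\,C_{M-1,i}(Q)$. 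The inductive hypothesis makes $B_{M-1,i}$ vanish at a primitive $d$-th root $\omega$ to order at least $\lfloor(M-1)/d\rfloor$, and $(Q;Q)_{M-1}$ vanishes there to the same order, so the sum vanishes to order at least $\lfloor(M-1)/d\rfloor$. When $d\nmid M$ this already equals $\lfloor M/d\rfloor$ and we are done; when $d\mid M$ we fall one order short and must exhibit a cancellation at $\omega$ between the leading Taylor coefficients of $B_{M-1,i}$ and of $(Q;Q)_{M-1}C_{M-1,i}$. This $d\mid M$ case is the crux: establishing the cancellation is a quantitative sharpening of the order-one step — controlling not just $\mathcal F(\zeta)$ but its derivatives, i.e.\ the ``Eichler integral'' of $\mathcal F$ near roots of unity, in the spirit of the analysis of Guerzhoy, Kent and Rolen — together with a sufficiently precise description of the correction polynomials $C_{M-1,i}$ coming from $D_{M-1}(q)G_{M-1}(q)$. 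Granting this, the remainder of the argument is routine bookkeeping with $q$-factorials and $q$-binomial coefficients.
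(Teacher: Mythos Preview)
Your proposal correctly identifies the heart of the matter---reducing \eqref{AKdivis} to vanishing of $A_s(N,i,\,\cdot\,)$ to prescribed order at roots of unity, and recognising that the strange identity supplies the order-one base case---but the induction you set up does not close. You induct on $M$, and when $d\mid M$ you yourself note that both $B_{M-1,i}$ and $(Q;Q)_{M-1}C_{M-1,i}$ vanish at a primitive $d$-th root $\omega$ only to order $M/d-1$, one short of the required $M/d$. You then assert that their leading Taylor coefficients cancel, appealing vaguely to derivatives of $\mathcal F$ at roots of unity and to ``Eichler integrals'', but give no mechanism for the cancellation and explicitly write ``granting this''. This is not bookkeeping: the correction $C_{M-1,i}$ comes from the $s$-dissection of $D_{M-1}(q)G_{M-1}(q)$, and there is no evident relation between its value at $\omega$ and the leading coefficient of $B_{M-1,i}$. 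Verifying the cancellation directly is equivalent to the statement you are trying to prove, so the argument as written is circular at exactly the step you flag as the crux.

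The paper (the proof of Theorem~\ref{mainthm}, which specialises to Theorem~\ref{AKthm}) avoids this difficulty by inducting on the \emph{order of differentiation} $\ell$ rather than on $M$. After absorbing the root of unity into the modulus via $A_{F,s}(N,i,q)=\sum_{j=0}^{k-1}q^jA_{F,ks}(N,i+js,q^k)$, it suffices to show $A_{F,s}^{(\ell)}(N,i,1)=0$ for $N\ge(\ell+1)s-1$. The key is the algebraic identity
\[
\sum_{j=0}^{\ell}C_{\ell,i,j}(s)\,q^{i+js}A_{F,s}^{(j)}(N,i,q^s)
=\frac1s\sum_{k=0}^{s-1}\zeta_s^{-ki}\Bigl(\bigl(q\tfrac{d}{dq}\bigr)^{\!\ell}F(q;N)\Bigr)\Big|_{q\to\zeta_s^k q},
\]
with $C_{\ell,i,\ell}(s)>0$. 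Evaluated at $q=1$, the right side is an orthogonality sum of the $\ell$-th asymptotic coefficients $\gamma_\ell(\zeta_s^k)$ of the partial theta function, and Proposition~\ref{asym_prop} shows that \emph{every} $\gamma_\ell(\zeta)$ is a linear combination of the powers $\zeta^{(m^2-a)/b}$ with $\chi(m)\neq0$; hence the right side vanishes for $i\notin S(s)$ for all $\ell$ simultaneously. The inductive hypothesis kills the $j<\ell$ terms on the left, forcing $A_{F,s}^{(\ell)}(N,i,1)=0$. Thus the higher-order vanishing comes directly and uniformly from the full asymptotic expansion, rather than from a delicate recursion in $M$ whose crucial step your proposal leaves open.
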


The proof of \eqref{AKdivis} relies on the fact that the Kontsevich-Zagier function satisfies the ``strange identity'' 
\begin{equation*} \label{KZagree}
\mathcal{F}(q) `` = " - \frac{1}{2} \sum_{n \geq 1} n \( \frac{12}{n} \) q^{(n^2 - 1)/24}.
\end{equation*}
Here the symbol $``="$ means that the two sides agree to all orders at every root of unity (this is explained fully in Sections~2  and 5 of  \cite{Zag}).    In this paper we show that a 
analogue of  Theorem \ref{AKthm} holds for a wide class of ``strange'' $q$-hypergeometric series---that is, $q$-series which agree asymptotically with partial theta functions.   

To state our result, let $F$ and $G$ be functions of the form
\begin{align} \label{Habiro_form}
F(q) &= \sum_{n=0}^{\infty} (q;q)_n f_n (q), \\
G(q) &= \sum_{n=0}^{\infty} (q;q^2)_n g_n (q), \label{oHabiro_form}
\end{align}
where $f_n (q)$ and $g_n (q)$ are polynomials.  (Functions of the form \eqref{Habiro_form} are said to lie in the \emph{Habiro ring} \cite{Habiro}.)   Note that $F(q)$ is not necessarily well-defined as a power series in $q$, but it has a power series expansion at every root of unity $\zeta$.   In other words
$F(\zeta e^{-t})$ has a meaningful definition as a  formal power series in $t$ whose coefficients are expressed in the usual way as the 
``derivatives" of $F(\zeta e^{-t})$ at $t=0$.  This is explained in detail in the next section.
Likewise,  $G(q)$  has a power series expansion at every odd-order root of unity. 

We will consider partial theta functions
\begin{equation}\label{eq:Pdef}
P_{a,b,\chi}^{(\nu)}(q) := \sum_{ n \geq 0} n^\nu \chi (n) q^\frac{n^2 - a}b,
\end{equation}
where   $\nu \in \{0, 1\}$, $a\geq 0$ and $b>0$ are  integers, and $\chi : \Z \to \mathbb{C}$ 
is a function satisfying the following properties:
\begin{equation}\label{eq:chi1}
 \chi (n)\neq 0 \quad  \text{only if}\quad  \frac{n^2 - a}b\in\Z,
 \end{equation}
and  for each root of unity $\zeta$,
 \begin{equation}\label{eq:chi2}
	\text{the  function $n\mapsto \zeta^\frac{n^2-a}b\chi(n)$ is periodic and has mean value zero.}
\end{equation}
These assumptions are enough to ensure that for each root of unity $\zeta$, the function 
$P_{a,b,\chi}^{(\nu)} (\zeta e^{-t})$ has an asymptotic expansion as $t\to 0^+$ (see Section~\ref{sec:asymp} below).
We note that  \eqref{eq:chi2} is satisfied by any odd periodic function.  To see this, suppose that $\chi$ is odd with  period $T$,
 and let $\zeta$ be a $k$th root of unity.  Set $M=\operatorname{lcm}(T, b k)$.  Then we have 
 \begin{equation*}
 \zeta^\frac{(M-n)^2-a}b\chi(M-n) =- \zeta^\frac{n^2-a}b\chi(n),
 \end{equation*}
 and so 
 \begin{equation*}
 \sum_{n=0}^{M-1} \zeta^\frac{n^2-a}b\chi(n)=0.
 \end{equation*}

For positive integers $s$ and $N$, consider the partial sum
\begin{equation}\label{eq:fqn}
F(q; N): =\sum_{n=0}^{N}  f_n (q)(q;q)_n 
\end{equation}
and its $s$-dissection
\begin{equation*}
F(q; N) = \sum_{i=0}^{s-1} q^i A_{F, s} (N, i , q^s ).
\end{equation*}
Define 
$S_{a,b, \chi} (s)\subseteq\{0, 1, \dots, s-1\}$ by 
\begin{equation*} \label{eq:Sabdef}
S_{a,b, \chi} (s) := \left\{ \frac{n^2 - a}{b} \pmod{s} \ \ :\ \  \chi(n)\neq 0\right\}.
\end{equation*} 
Our first main result is the following.
\begin{theorem} \label{mainthm}
Suppose that $F$ is a function as in \eqref{Habiro_form} and that $P_{a,b,\chi}^{(\nu)}$ is a function as in \eqref{eq:Pdef}. Suppose that for each root of unity $\zeta$ we have the asymptotic expansion
\begin{equation} \label{F_asym_agree}
P_{a,b,\chi}^{(\nu)} (\zeta e^{-t})\sim F(\zeta e^{-t})\quad \text{as} \quad t \to 0^+.
\end{equation}
Suppose that $s$ and $N$ are positive integers and  that $i \not \in S_{a,b, \chi} (s)$. 
Then we have 
\begin{equation*} 
(q;q)_{\lambda(N,s)} \mid A_{F,s}(N,i,q).
\end{equation*}
\end{theorem}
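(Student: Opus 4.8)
The plan is to convert the asserted divisibility into a statement about orders of vanishing at roots of unity, to observe that the analogous statement for the $s$-dissection of the partial theta function $P_{a,b,\chi}^{(\nu)}$ is essentially vacuous when $i\notin S_{a,b,\chi}(s)$, and to transfer this to $F(q;N)$ using the fact that a partial sum of a series in the Habiro ring agrees with the whole series to high order at every root of unity.

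First I would record the elementary identity $(q;q)_{\lambda}=\pm\prod_{d\ge 1}\Phi_d(q)^{\lfloor\lambda/d\rfloor}$, which follows from $1-q^{m}=\pm\prod_{d\mid m}\Phi_d(q)$ by counting, for each $d$, the multiples of $d$ that are $\le\lambda$. Hence for a polynomial $A(q)$ the divisibility $(q;q)_{\lambda}\mid A(q)$ holds if and only if $A$ vanishes at every root of unity of order $d$ to order at least $\lfloor\lambda/d\rfloor$. So it suffices to fix a root of unity $\zeta$ of order $d$ and show that $A_{F,s}(N,i,q)$ vanishes at $q=\zeta$ to order at least $\lfloor\lambda(N,s)/d\rfloor=\lfloor(N+1)/(sd)\rfloor$. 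Put $\omega=e^{2\pi i/s}$ and fix an $s$-th root $\xi$ of $\zeta$; the usual root-of-unity extraction applied to $F(q;N)=\sum_{j=0}^{s-1}q^{j}A_{F,s}(N,j,q^{s})$ gives
\[
q^{i}A_{F,s}(N,i,q^{s})=\frac1s\sum_{k=0}^{s-1}\omega^{-ki}F(\omega^{k}q;N).
\]
I then substitute $q=\xi e^{-u/s}$, so that $q^{s}=\zeta e^{-u}$, while each $\omega^{k}\xi$ is a root of unity whose order $r_k$ divides $sd$ and hence satisfies $r_k\le sd$.

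The transfer uses two inputs. First, if $q'$ is a root of unity of order $r$, then $(q;q)_n$ evaluated at $q=q'e^{-t}$ vanishes to order $\lfloor n/r\rfloor$ in $t$ (each factor $1-q^{m}$ with $r\mid m$ contributes a simple zero), so each term of the tail $\sum_{n>N}f_n(q)(q;q)_n$ vanishes to order at least $\lfloor(N+1)/r\rfloor$; recalling that each Taylor coefficient of $F(q'e^{-t})$ is a finite sum, this shows $F(q'e^{-t};N)\equiv F(q'e^{-t})\pmod{t^{\lfloor(N+1)/r\rfloor}}$ as formal power series in $t$, and then the hypothesis $F(q'e^{-t})\sim P_{a,b,\chi}^{(\nu)}(q'e^{-t})$ gives $F(q'e^{-t};N)\equiv P_{a,b,\chi}^{(\nu)}(q'e^{-t})\pmod{t^{\lfloor(N+1)/r\rfloor}}$. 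Applying this with $q'=\omega^{k}\xi$, $r=r_k$ and $t=u/s$, and using $\lfloor(N+1)/r_k\rfloor\ge\lfloor(N+1)/(sd)\rfloor$, I obtain
\[
q^{i}A_{F,s}(N,i,q^{s})\equiv\frac1s\sum_{k=0}^{s-1}\omega^{-ki}P_{a,b,\chi}^{(\nu)}\big(\omega^{k}\xi e^{-u/s}\big)\pmod{u^{\lfloor(N+1)/(sd)\rfloor}}.
\]
Second, for $u>0$ the series defining $P_{a,b,\chi}^{(\nu)}$ converges absolutely, so the finite sum over $k$ may be interchanged with the sum over $n$; since $\chi(n)\neq 0$ forces $(n^{2}-a)/b\in\Z$, the inner sum $\frac1s\sum_{k}\omega^{k((n^{2}-a)/b-i)}$ is $1$ when $(n^{2}-a)/b\equiv i\pmod s$ and $0$ otherwise, so the right-hand side collapses to $\sum_{(n^{2}-a)/b\equiv i\,(s)}n^{\nu}\chi(n)\,\xi^{(n^{2}-a)/b}e^{-u(n^{2}-a)/(bs)}$. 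As $i\notin S_{a,b,\chi}(s)$, every surviving term has $\chi(n)=0$, so this vanishes identically. Therefore $q^{i}A_{F,s}(N,i,q^{s})$ is divisible by $u^{\lfloor(N+1)/(sd)\rfloor}$ in $\C[[u]]$; since $q^{i}$ is a unit there and $q^{s}-\zeta=\zeta(e^{-u}-1)$ equals $u$ times a unit, the polynomial $A_{F,s}(N,i,q)$ vanishes at $q=\zeta$ to order at least $\lfloor(N+1)/(sd)\rfloor=\lfloor\lambda(N,s)/d\rfloor$, as required.

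The step I expect to be the main obstacle is making the first input fully rigorous: $F(q'e^{-t})$ exists only as a formal power series (each of its coefficients a finite sum), and one must be careful that ``agreement to order $\lfloor(N+1)/r\rfloor$ with $F$, hence with $P_{a,b,\chi}^{(\nu)}$'' is correctly formulated there, and that the floor bookkeeping $\lfloor(N+1)/r_k\rfloor\ge\lfloor(N+1)/(sd)\rfloor$ indeed holds for every $s$-th root $\xi$ of every root of unity $\zeta$ of order $d$. The remaining ingredients --- the root-of-unity extraction, the collapse of the $P$-dissection, and the translation from vanishing in $u$ back to divisibility by $(q;q)_{\lambda(N,s)}$ --- are routine.
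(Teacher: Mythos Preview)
Your argument is correct and a bit cleaner than the paper's. The core ingredients are the same --- the cyclotomic factorization of $(q;q)_{\lambda(N,s)}$, the root-of-unity extraction of the $i$th piece of the $s$-dissection, the ``tail vanishes to high order at roots of unity'' property of Habiro-type series, and the asymptotic agreement with $P_{a,b,\chi}^{(\nu)}$ --- but the packaging differs in two ways.

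First, the paper reduces the vanishing of $A_{F,s}(N,i,q)$ at a primitive $k$th root of unity to the vanishing of $A_{F,ks}(N,i',q)$ at $q=1$ via the identity $A_{F,s}(N,i,q)=\sum_{j=0}^{k-1}q^{j}A_{F,ks}(N,i+js,q^{k})$, and then proves $A_{F,s}^{(\ell)}(N,i,1)=0$ by induction on $\ell$, using a triangular system built from the operator $\bigl(q\tfrac{d}{dq}\bigr)^{\ell}$ and an explicit array of integers $C_{\ell,i,j}(s)$. You instead fix an $s$th root $\xi$ of $\zeta$ and substitute $q=\xi e^{-u/s}$ directly, which makes the vanishing to all the required orders simultaneous and removes both the reduction-to-$q=1$ step and the induction on $\ell$.

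Second, the paper computes the asymptotic coefficients of $P$ explicitly (Proposition~3.1, via Mellin transforms and Hurwitz zeta values) and then applies orthogonality to those coefficients. You observe instead that the extracted sum $\tfrac{1}{s}\sum_{k}\omega^{-ki}P_{a,b,\chi}^{(\nu)}(\omega^{k}\xi e^{-u/s})$ is the $i$th piece of the $s$-dissection of a convergent series, and that this piece is \emph{identically zero} for $u>0$ when $i\notin S_{a,b,\chi}(s)$; hence its asymptotic expansion is zero without any computation. This is a genuine simplification: you use Proposition~3.1 only to know that the asymptotic expansion exists, not its precise shape.

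The steps you flagged as potentially delicate are fine. The congruence $F(q'e^{-t};N)\equiv F(q'e^{-t})\pmod{t^{\lfloor(N+1)/r\rfloor}}$ follows exactly from Lemma~2.1(1) of the paper (stability for $N\ge(\ell+1)r-1$, i.e.\ for $\ell\le\lfloor(N+1)/r\rfloor-1$). The inequality $\lfloor(N+1)/r_k\rfloor\ge\lfloor(N+1)/(sd)\rfloor$ holds because every $s$th root $\omega^{k}\xi$ of a $d$th root of unity has order dividing $sd$. And the identity $\lfloor\lambda(N,s)/d\rfloor=\lfloor(N+1)/(sd)\rfloor$ is the standard $\lfloor\lfloor x/s\rfloor/d\rfloor=\lfloor x/(sd)\rfloor$.
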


Analogously, for positive integers $s$ and $N$ with $s$ odd, consider the partial sum
\begin{equation} \label{Habiro_form2}
G(q; N): =\sum_{n=0}^{N}  g_n (q) (q;q^2)_n 
\end{equation}
and its $s$-dissection
\begin{equation*}\label{eq:gqn}
G(q; N) = \sum_{i=0}^{s-1} q^i A_{G,s} (N, i , q^s ).
\end{equation*}
Then the $A_{G,s}(N,i,q^s)$ also enjoy strong divisibility properties. 
Define
\begin{equation}\label{mudefn}
\mu (N,k,s) = \left\lfloor \frac{N}{s(2k-1)} +\frac12 \right\rfloor. 
\end{equation}

\begin{theorem} \label{mainthm2}
Suppose that $G$ is a function as in \eqref{oHabiro_form} and that $P_{a,b,\chi}^{(\nu)}$ is a function as in \eqref{eq:Pdef}.
Suppose  that for each  root of unity $\zeta$ of odd order we have 
\[P_{a,b,\chi}^{(\nu)} (\zeta e^{-t})\sim G(\zeta e^{-t})\qquad \text{as  $t \to 0^+$}.\]
Suppose that $s$ and $N$ are positive integers with $s$ odd and  that $i \not \in S_{a,b, \chi} (s)$. 
Then we have 
\begin{equation*} 
(q;q^2)_{\mu(N,1, s)} \mid A_{G,s} (N,i,q).
\end{equation*}
\end{theorem}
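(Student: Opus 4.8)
The plan, paralleling the treatment of Theorem~\ref{mainthm}, is to work with $(q;q^2)_n$ in place of $(q;q)_n$ and with roots of unity of odd order in place of arbitrary ones. The first move is to recast the divisibility $(q;q^2)_{\mu(N,1,s)}\mid A_{G,s}(N,i,q)$ as a family of vanishing-order statements. Writing $\mu=\mu(N,1,s)$ and factoring into cyclotomic polynomials,
\[
(q;q^2)_{\mu}=(-1)^{\mu}\prod_{d\ \mathrm{odd}}\Phi_d(q)^{c_d},\qquad
c_d=\#\{\ell\ \mathrm{odd}:\ \ell\ge1,\ \ell d\le 2\mu-1\};
\]
since $(q;q^2)_\mu$ has unit leading coefficient, it suffices to show that for each odd $d$ and each primitive $d$th root of unity $\xi$, the polynomial $A_{G,s}(N,i,q)$ vanishes at $q=\xi$ to order at least $c_d$.

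Next I would carry out the asymptotic analysis near a root of unity $\zeta$ of odd order $e$. The factor $(q;q^2)_n$ at $q=\zeta e^{-t}$ vanishes to order $w_\zeta(n):=\#\{k:\ 0\le k<n,\ e\mid 2k+1\}$ as $t\to0^+$, and $w_\zeta$ is nondecreasing with $w_\zeta(n)\to\infty$. Hence, inside the formal calculus of asymptotic expansions at $\zeta$ developed in Section~\ref{sec:asymp}, the partial sum $G(\zeta e^{-t};N)$ agrees with the full series $G(\zeta e^{-t})$---and therefore, by the hypothesis, with $P_{a,b,\chi}^{(\nu)}(\zeta e^{-t})$---up to $O(t^{w_\zeta(N+1)})$ as $t\to0^+$: for $m<w_\zeta(N+1)$ the coefficient of $t^m$ already stabilizes by the $N$th term of the series.

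The third step isolates the dissection component by a finite Fourier average. Fix an odd $d$, a primitive $d$th root of unity $\xi$, and any $\omega$ with $\omega^s=\xi$; then $\operatorname{ord}(\omega)\mid sd$ is odd, so the hypothesis applies at $\eta\omega$ for every $s$th root of unity $\eta$, and $(\eta\omega)^s=\xi$. From
\[
G(\eta\omega e^{-t};N)=\sum_{j=0}^{s-1}(\eta\omega)^{j}e^{-jt}A_{G,s}(N,j,\xi e^{-st}),
\]
averaging against $\bar{\eta}^{\,i}$ over the $s$th roots of unity $\eta$ extracts $s\,\omega^{i}e^{-it}A_{G,s}(N,i,\xi e^{-st})$, while the same average of $P_{a,b,\chi}^{(\nu)}$ equals
\[
s\sum_{\substack{n\ge0,\ \chi(n)\ne0\\ (n^2-a)/b\ \equiv\ i\ (\mathrm{mod}\ s)}} n^{\nu}\chi(n)\,\omega^{(n^2-a)/b}e^{-t(n^2-a)/b},
\]
which is the empty sum---hence identically zero---exactly because $i\notin S_{a,b,\chi}(s)$. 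Since $\operatorname{ord}(\eta\omega)\mid\operatorname{lcm}(s,\operatorname{ord}\omega)\mid sd$ for every $\eta$, the agreement order from the previous step (with $\zeta=\eta\omega$) is at least $v:=\#\{\ell\ \mathrm{odd}:\ \ell\ge1,\ \ell sd\le 2N+1\}$, because $sd\mid 2k+1$ forces $\operatorname{ord}(\eta\omega)\mid 2k+1$. Therefore $A_{G,s}(N,i,\xi e^{-st})=O(t^{v})$ as $t\to0^+$, and since $t\mapsto\xi e^{-st}$ has nonzero derivative at $t=0$, the polynomial $A_{G,s}(N,i,q)$ vanishes at $q=\xi$ to order at least $v$. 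Finally $v\ge c_d$: from $\mu\le N/s+\tfrac12$ one gets $2\mu-1\le 2N/s\le(2N+1)/s$, hence $(2\mu-1)/d\le(2N+1)/(sd)$, and counting odd integers below each bound gives $c_d\le v$. Assembling these vanishing orders over all odd $d$ yields the stated divisibility.

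The step I expect to be most delicate is the asymptotic bookkeeping in the middle: establishing rigorously, within the framework of Section~\ref{sec:asymp}, that the partial sum, the full series, and the partial theta function agree to precisely the claimed order, and that these asymptotic relations may be added over the finitely many $\eta$. I also note that the extra parameter $k$ in \eqref{mudefn} points to a componentwise sharpening---vanishing at primitive $d$th roots of unity to an order dictated by the various $\mu(N,k,s)$---and that the argument above in fact delivers such a refinement, the theorem as stated being its $k=1$ case.
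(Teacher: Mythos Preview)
Your argument is correct and follows a genuinely different route from the paper's.  Both proofs begin by factoring $(q;q^2)_{\mu(N,1,s)}$ into powers of odd cyclotomic polynomials and reducing to a vanishing-order statement for $A_{G,s}(N,i,q)$ at primitive $d$th roots of unity for odd $d$.  From there the paper proceeds (as in the proof of Theorem~\ref{mainthm}) by first reducing, via a sub-dissection, to the case of vanishing at $q=1$, and then proving $A_{G,s}^{(\ell)}(N,i,1)=0$ by induction on $\ell$ using the lemma that expresses $\bigl(q\frac{d}{dq}\bigr)^{\ell}h(q)$ through the array $C_{\ell,i,j}(s)$; the induction is closed by appealing to the explicit shape \eqref{eq:gamma_form} of the asymptotic coefficients $\gamma_n(\zeta)$ together with orthogonality.

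You bypass both the induction and the explicit form of $\gamma_n(\zeta)$: by averaging the \emph{function} $P_{a,b,\chi}^{(\nu)}(\eta\omega e^{-t})$ over $s$th roots $\eta$ (a legitimate termwise operation since the series converges absolutely for $t>0$), you obtain an honest function that is identically zero because $i\notin S_{a,b,\chi}(s)$; its asymptotic expansion is therefore zero, and matching with the averaged $G(\,\cdot\,;N)$ side yields the required order of vanishing in one stroke.  This is cleaner and uses less machinery.  The paper's approach, though heavier, operates entirely at the level of formal asymptotic coefficients and so would transfer to settings where one only has the formal expansion of the right-hand side rather than a convergent partial theta function.
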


We illustrate Theorem \ref{mainthm2} with an example from Ramanujan's lost notebook.    Consider the $q$-series
\begin{equation*}
\mathcal{G}(q) = \sum_{n \geq 0} (q;q^2)_nq^n.
\end{equation*}
From \cite[Entry 9.5.2]{AnBe} we have the identity
\begin{equation*}
\sum_{n \geq 0} (q;q^2)_nq^n = \sum_{n \geq 0} (-1)^nq^{3n^2+2n} (1+q^{2n+1}),
\end{equation*}
which may be written as
\begin{equation*}
\sum_{n \geq 0} (q;q^2)_nq^n = \sum_{n \geq 0} \chi_{6}(n) q^{(n^2-1)/3},
\end{equation*}
where 
\begin{equation*}
\chi_{6}(n) :=
\begin{cases}
1, &\text{if $n \equiv 1,2 \pmod{6}$}, \\
-1, &\text{if $n \equiv 4,5 \pmod{6}$}, \\
0, &\text{otherwise}.
\end{cases}
\end{equation*}
Therefore,  for each odd-order root of unity $\zeta$ we find that 
\[
P_{1,3, \chi_6}^{(0)} (\zeta e^{-t} ) \sim \mathcal{G}(\zeta e^{-t}) \qquad\text{as $t \to 0^{+}$.}
\]
Since $\chi_{6}$ is odd,   it satisfies conditions \eqref{eq:chi1} and \eqref{eq:chi2}.  Thus, from Theorem~\ref{mainthm2}, we find that for $i \not\in S_{1,3,\chi_6}(s)$ 
 we have  
\begin{equation} \label{Ramadivis}
(q;q^2)_{\lfloor \frac Ns+\frac12 \rfloor} \mid A_{\mathcal{G},s} (N, i, q).
\end{equation}

For example, when $s=5$ we have $S_{1,3,\chi_6}(5) = \{0,1,3\}$.    For $N=8$ we have
\begin{align*}
A_{\mathcal{G},5} (8, 2, q) &= q^2(q;q^2)_2(1+q^2-q^3+2q^4 -q^5+2q^6 + q^8) \\
\intertext{and}
A_{\mathcal{G},5} (8, 4, q) &= -q(q;q^2)_2(1-q+q^2)(1+q + q^2 + q^4 + q^6),
\end{align*}
as predicted by \eqref{Ramadivis}, while the factorizations of $A_{\mathcal{G},5} (8, i, q)$ into irreducible factors for $i \in \{0,1,3\}$ are
\begin{align*}
A_{\mathcal{G},5} (8, 0, q) &= (1-q)(1+q^4-2q^5+q^6 -2q^7+2q^8 -3q^9 + q^{10} - 2q^{11} + q^{12}), \\
A_{\mathcal{G},5} (8, 1, q) &= 1+2q^3-q^4+2q^5-3q^6 + 5q^7 - 5q^8 + 4q^9 - 5q^{10} +4q^{11}-2q^{12} +q^{13} - q^{14},\\
A_{\mathcal{G},5} (8, 3, q) &= q(-1+q^2-2q^3+2q^4-5q^5+5q^6-4q^7+5q^8-4q^9+3q^{10}-2q^{11} + q^{12}).
\end{align*}

The rest of the paper is organized as follows.    In the next section we discuss power series expansions of $F$ and $G$ at roots of unity,
 and in Section~3 we discuss the asymptotic expansions of partial theta functions.   In Section 4 we prove the main theorems.
In Section~5 we give two further examples---one generalizing \eqref{AKdivis} and one generalizing \eqref{Ramadivis}. We close with some remarks on congruences for the coefficients of $F(1-q)$ and
$G(1-q)$.

\section{Power series expansions of $F$ and $G$}
Let $F(q)$ be a function as in \eqref{Habiro_form}   and $G(q)$ be a function as in \eqref{oHabiro_form}. Here we collect some facts which allow us to meaningfully define $F(\zeta e^{-t})$   and $G(\zeta e^{-t})$ as formal power series.
\begin{lemma}\label{stablelemma}
Let  $F(q;N)$ be as in     \eqref{eq:fqn}, and let 
  $G(q;N)$
be as in  \eqref{Habiro_form2}.
Suppose that $\zeta$ is a $k$th root of unity.
\begin{enumerate}
\item 
The values $\(q\tfrac d{dq}\)^{\ell} F(q ;N)\big|_{q=\zeta}$ are stable for $N \geq  (\ell+1)k-1$.
\item If $k$ is odd then the values $\(q\tfrac d{dq}\)^{\ell} G(q ;N)\big|_{q=\zeta}$ are stable for  $2N \geq  (2\ell+1)k$.
\end{enumerate}
\end{lemma}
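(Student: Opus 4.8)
The plan is to show that, beyond the stated thresholds, enlarging $N$ does not change $\bigl(q\frac{d}{dq}\bigr)^{\ell}F(q;N)\big|_{q=\zeta}$ (resp. the $G$-analogue), because each newly added summand vanishes to high order at $q=\zeta$ and is therefore annihilated by the differential operator. Write $\theta:=q\frac{d}{dq}$. Since passing from $N$ to $N+1$ adds the single term $f_{N+1}(q)(q;q)_{N+1}$ in \eqref{eq:fqn} (resp. $g_{N+1}(q)(q;q^2)_{N+1}$ in \eqref{Habiro_form2}), and since the relevant orders of vanishing at $\zeta$ are nondecreasing in $N$, it suffices to check that this single term is killed by $\theta^{\ell}$ at $q=\zeta$ under the stated hypothesis.

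I would base this on two elementary facts. First, if $\zeta$ is a $k$th root of unity and $k\mid j$, write $j=km$ and note $1-q^{j}=(1-q^{k})(1+q^{k}+\cdots+q^{(m-1)k})$; since $1-q^{k}$ has a simple zero at $\zeta$, each factor $1-q^{j}$ of $(q;q)_{n}$ with $k\mid j$ contributes a zero of order at least one at $\zeta$, so $f_{n}(q)(q;q)_{n}$ vanishes at $\zeta$ to order at least $\lfloor n/k\rfloor$, the number of multiples of $k$ in $\{1,\dots,n\}$. Second, $\theta$ lowers the order of a zero by at most one: if $h(q)=(q-\zeta)^{m}h_{1}(q)$ with $\zeta\neq0$, then $\theta h(q)=(q-\zeta)^{m-1}\bigl(mq\,h_{1}(q)+q(q-\zeta)h_{1}'(q)\bigr)$ vanishes to order $\ge m-1$, so inductively $\theta^{\ell}h$ vanishes to order $\ge m-\ell$ and in particular $\theta^{\ell}h(\zeta)=0$ whenever $m\ge\ell+1$. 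Combining the two, the $n$th summand of $F(q;N)$ contributes nothing to $\theta^{\ell}F(q;N)\big|_{q=\zeta}$ as soon as $\lfloor n/k\rfloor\ge\ell+1$, i.e.\ $n\ge(\ell+1)k$; applying this with $n=N+1$ gives stability for $N\ge(\ell+1)k-1$, which is part (1).

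Part (2) is the same argument with $(q;q^{2})_{n}=\prod_{j=1}^{n}(1-q^{2j-1})$ and $\zeta$ a $k$th root of unity of odd order. Now $\gcd(2,k)=1$, so $2j-1\equiv0\pmod k$ has the unique solution class $j\equiv(k+1)/2\pmod k$; for such $j$ one writes $2j-1=km$ with $m$ odd, and the cofactor $1+q^{k}+\cdots+q^{(m-1)k}$ equals $m\neq0$ at $\zeta$, so again a simple zero. Hence $g_{n}(q)(q;q^{2})_{n}$ vanishes at $\zeta$ to order at least the number of $j\le n$ with $k\mid 2j-1$; the $(\ell+1)$-st such $j$ is $(k+1)/2+\ell k$, so this order is $\ge\ell+1$ once $n\ge((2\ell+1)k+1)/2$. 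Taking $n=N+1$, this holds as soon as $2N\ge(2\ell+1)k-1$, hence a fortiori under the hypothesis $2N\ge(2\ell+1)k$ (note $(2\ell+1)k$ is odd while $2N$ is even). The operator estimate then kills the new term and yields (2). No step here is deep; I expect the only real care to lie in the bookkeeping of the orders of vanishing — in particular the count of solutions of $k\mid 2j-1$ for odd $k$ in part (2) — and in verifying the one-step estimate for $\theta$, which I would state exactly as above rather than appeal to a general principle.
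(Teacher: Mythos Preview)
Your proof is correct and follows essentially the same approach as the paper: both show that the tail terms $f_n(q)(q;q)_n$ (resp.\ $g_n(q)(q;q^2)_n$) vanish to order at least $\ell+1$ at $\zeta$ once $n$ exceeds the stated threshold, so that applying $\bigl(q\tfrac{d}{dq}\bigr)^\ell$ and evaluating at $\zeta$ gives zero. The paper phrases the vanishing as divisibility by $(1-q^k)^{\ell+1}$ and then invokes the fact that $\bigl(q\tfrac{d}{dq}\bigr)^\ell$ is a polynomial combination of $\bigl(\tfrac{d}{dq}\bigr)^j$ for $0\le j\le\ell$, while you work directly with the order of the zero at $\zeta$ and the one-step estimate for $\theta$; these are equivalent formulations of the same argument.
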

\begin{proof}
For each positive integer $k$ we have
\[
\begin{aligned}
(1-q^k)^{\ell+1} &\mid (q;q)_N \ \ \ \ \text{for}\ \ \ N \geq (\ell+1)k, \\
(1-q^{2k-1})^{\ell+1} &\mid (q;q^2)_N  \ \ \ \text{for}\ \ \ 2N \geq (2\ell+1)(2k-1) +1.
\end{aligned}
\]
It follows that for $0\leq j \leq \ell$ we have
\[
\begin{aligned}
\(\tfrac d{dq}\)^{j} (q;q)_N\big |_{q =\zeta}&=0\ \ \ \text{for}\ \ \ N \geq (\ell+1)k, \\
\(\tfrac d{dq}\)^{j} (q;q^2)_N\big |_{q=\zeta}&=0\ \ \ \text{for odd $k$ and}\ \ \ 2N \geq (2\ell+1)k+1.
\end{aligned}
\]  
The lemma follows since for any polynomial $f(q)$,  the polynomial $\(q\frac d{dq}\)^\ell f(q)$ is a linear combination 
(with polynomial coefficients) of $\pfrac q {dq}^j f(q)$ with $0\leq j\leq \ell$ (see for example \cite[Lemma 2.2]{AS}).
\end{proof}

For any polynomial $f(q)$, any $\zeta$ and any $\ell\geq 0$ we have  \cite[Lemma 2.3]{AS}
\begin{equation}\label{eq:taylorswap}
\pfrac{d}{dt}^\ell f(\zeta e^{-t})\big|_{t=0}=(-1)^\ell\(q\frac d{dq}\)^\ell f(q)\big|_{q=\zeta}.
\end{equation}

Let $F(q)$ be as in \eqref{Habiro_form} and let $\zeta$ be  a $k$th root of unity.
The last fact together with Lemma~\ref{stablelemma}  allows us to define
\begin{equation*}\label{eq:Fderivdef}
\pfrac d{dt}^\ell F(\zeta e^{-t})\big|_{t=0}:=\pfrac d{dt}^\ell F(\zeta e^{-t}; N)\big|_{t=0}\qquad \text{for any $N\geq k(\ell+1)-1$.}
\end{equation*}

We therefore have a formal series expansion
\begin{equation}\label{eq:formaldiff}
F(\zeta e^{-t})=\sum_{\ell=0}^\infty \frac{\pfrac d{dt}^\ell F(\zeta e^{-t})\big|_{t=0}}{\ell!}\, t^\ell.
\end{equation}

 Similarly, if $G(q)$ is a function as in \eqref{oHabiro_form} and $\zeta$ is a $k$th root of unity with odd $k$, then we can define
\begin{equation}\label{eq:Gderivdef}
\pfrac d{dt}^\ell G(\zeta e^{-t})\big|_{t=0}:=\pfrac d{dt}^\ell G(\zeta e^{-t}; N)\big|_{t=0}\qquad \text{for any $2N\geq k(2\ell+1)$,}
\end{equation}
using \eqref{eq:taylorswap} and Lemma~\ref{stablelemma}. Thus, we have a formal series expansion 
\begin{equation}\label{eq:Gformaldiff}
G(\zeta e^{-t})=\sum_{\ell=0}^\infty \frac{\pfrac d{dt}^\ell G(\zeta e^{-t})\big|_{t=0}}{\ell!}\, t^\ell.
\end{equation}

\section{The asymptotics of $P_{a,b,\chi}^{(\nu)}$}\label{sec:asymp}
In this section we discuss  the asymptotic expansion of the partial theta functions $P_{a,b,\chi}^{(\nu)}(q)$ defined in 
\eqref{eq:Pdef}.  Recall that 
\begin{equation*}
P_{a,b,\chi}^{(\nu)}(q) := \sum_{n \geq 0} n^\nu \chi (n) q^\frac{n^2 - a}b,
\end{equation*}
where   $\nu \in \{0, 1\}$, $a\geq 0$ and $b>0$ are  integers, and $\chi : \Z \to \mathbb{C}$ 
is a function satisfying properties \eqref{eq:chi1} and \eqref{eq:chi2}.

The properties which we describe in the next proposition are more or less standard (see for example \cite[p. 98]{LZ}).
For convenience and completeness we  sketch a proof of the 
 following:
\begin{proposition} \label{asym_prop}
Suppose that $P_{a,b,\chi}^{(\nu)}(q)$ is   as in \eqref{eq:Pdef}.
Let $\zeta$ be a root of unity and let $N$ be  a period of the function $n\mapsto \zeta^\frac{n^2-a}b\chi(n)$.
Then we have the asymptotic expansion 
\[
P_{a,b, \chi}^{(\nu)} ( \zeta e^{-t} ) \sim \sum_{n  = 0}^\infty \gamma_n (\zeta) t^n,\qquad t \to 0^+,
\]
where 
\begin{equation}\label{eq:gamma_form}
\gamma_n (\zeta) = \sum_{\substack{1\leq m\leq N  \\ \chi(m) \neq 0 }} a(m,n, N)\zeta^{\frac{m^2-a}{b}}
\end{equation}
with certain complex numbers $a(m,n,N)$.
\end{proposition}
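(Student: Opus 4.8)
The plan is to carry out the standard Euler--Maclaurin analysis of partial theta functions; the hypotheses \eqref{eq:chi1} and \eqref{eq:chi2} enter essentially at only one point, namely to cancel a divergent term. Set $q=\zeta e^{-t}$ with $t>0$. By \eqref{eq:chi1} the summand is nonzero only when $\frac{n^2-a}{b}\in\Z$, so every exponent that occurs is a genuine integer; and since $e^{-t(n^2-a)/b}$ decays faster than any power of $n$ while $\chi$ is bounded, the series
\[
P_{a,b,\chi}^{(\nu)}(\zeta e^{-t})=\sum_{n\ge 0} n^\nu\chi(n)\,\zeta^{\frac{n^2-a}{b}}\,e^{-t\frac{n^2-a}{b}}
\]
converges for each $t>0$. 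Put $C(m):=\zeta^{(m^2-a)/b}\chi(m)$; by \eqref{eq:chi2} this is periodic in $m$ with period $N$ and has mean value zero over a period, and it is supported on $\{m:\chi(m)\neq 0\}$. The periodicity relation $\zeta^{((Nj+m)^2-a)/b}\chi(Nj+m)=C(m)$ lets one break the sum into residue classes modulo $N$, say $\{0,1,\dots,N-1\}$:
\[
P_{a,b,\chi}^{(\nu)}(\zeta e^{-t})=\sum_{m=0}^{N-1} C(m)\,\Psi_m(t),\qquad \Psi_m(t):=\sum_{j\ge 0}(Nj+m)^\nu\,e^{-t\frac{(Nj+m)^2-a}{b}}.
\]

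The core step is the expansion of each $\Psi_m(t)$ as $t\to 0^+$, which I would obtain from the Euler--Maclaurin formula applied to $h_m(x):=(Nx+m)^\nu e^{-t((Nx+m)^2-a)/b}$ on $[0,\infty)$; since $h_m$ and all of its derivatives decay at $\infty$ for fixed $t>0$, this gives
\[
\Psi_m(t)=\int_0^\infty h_m(x)\,dx+\tfrac12 h_m(0)-\sum_{k=1}^{K}\frac{B_{2k}}{(2k)!}h_m^{(2k-1)}(0)+R_{m,K}(t).
\]
For each $j$ the value $h_m^{(j)}(0)$ is a polynomial in $t$ times $e^{-t(m^2-a)/b}$, hence analytic at $t=0$, so the finite sum of correction terms contributes an honest power series in $t$. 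The integral is the only ingredient that can be singular: substituting $u=Nx+m$ and writing $\int_m^\infty=\int_0^\infty-\int_0^m$,
\[
\int_0^\infty h_m(x)\,dx=\frac{e^{ta/b}}{N}\left(\tfrac12\,\Gamma\!\Big(\tfrac{\nu+1}{2}\Big)\Big(\tfrac{b}{t}\Big)^{\!\frac{\nu+1}{2}}-\int_0^m u^\nu e^{-tu^2/b}\,du\right),
\]
in which $\int_0^m u^\nu e^{-tu^2/b}\,du$ is entire in $t$, while the factor $t^{-(\nu+1)/2}$ is carried only by the term $\tfrac{e^{ta/b}}{2N}\Gamma(\tfrac{\nu+1}{2})(b/t)^{(\nu+1)/2}$, which is \emph{independent of $m$}. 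Summing over $m$, that divergent term is multiplied by $\sum_{m=0}^{N-1}C(m)=\sum_{\chi(m)\neq 0}\zeta^{(m^2-a)/b}\chi(m)$, which vanishes by \eqref{eq:chi2}; hence all divergent contributions cancel. Letting $K\to\infty$, what survives is a genuine asymptotic power series $\sum_{n\ge 0}\gamma_n(\zeta)t^n$, and collecting, for each $m$, the coefficient of $t^n$ in the power-series part of $\Psi_m(t)$ that remains after the divergent term is removed, weighting by $C(m)$, and relabeling the residue $0$ as $N$ via the periodicity of $C$, shows $\gamma_n(\zeta)=\sum_{\chi(m)\neq 0}a(m,n,N)\,\zeta^{(m^2-a)/b}$ with explicit numbers $a(m,n,N)$ (built from $\chi(m)$, Bernoulli numbers, $\Gamma$-values, and the Taylor coefficients of $u^\nu e^{-tu^2/b}$); this is \eqref{eq:gamma_form}.

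I expect the only genuine work to be the analytic bookkeeping. One must verify the Euler--Maclaurin remainder bound $R_{m,K}(t)=O(t^{\,K-(\nu+1)/2})$ — the standard estimate, using that $\frac{d^{2K}}{dx^{2K}}h_m(x)$ equals $e^{-t((Nx+m)^2-a)/b}$ times a polynomial in $Nx+m$ whose $t$-degree is at least $K$, together with $\int_0^\infty u^r e^{-tu^2/b}\,du=O(t^{-(r+1)/2})$ — so that increasing $K$ genuinely produces the asymptotic expansion rather than a merely formal one; and one must track the $m$-dependence carefully enough to confirm that the divergent piece is the \emph{only} $m$-independent ingredient, since this is exactly what makes the mean-value-zero cancellation available. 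An essentially equivalent alternative is the Mellin-transform route: insert $e^{-x}=\frac{1}{2\pi i}\int_{(c)}\Gamma(s)x^{-s}\,ds$, observe that $s\mapsto\sum_{n\ge 1}n^\nu\chi(n)\zeta^{(n^2-a)/b}\big(\tfrac{n^2-a}{b}\big)^{-s}$ continues to an entire function precisely because $n\mapsto\zeta^{(n^2-a)/b}\chi(n)$ is periodic with mean value zero, and move the contour to the left to pick up the residues of $\Gamma(s)$ at $s=0,-1,-2,\dots$; the resulting special values are again finite $\C$-linear combinations of the $\zeta^{(m^2-a)/b}$ with $\chi(m)\neq 0$.
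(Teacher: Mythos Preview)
Your Euler--Maclaurin argument is correct in outline; the remainder bound $R_{m,K}(t)=O(t^{K-(\nu+1)/2})$ and the cancellation of the divergent $t^{-(\nu+1)/2}$ term via $\sum_m C(m)=0$ both work as you describe. The paper, however, takes the Mellin route that you mention only as an alternative: it first multiplies by $e^{-at/b}$ so that the exponent becomes $e^{-n^2 t/b}$ rather than $e^{-t(n^2-a)/b}$, computes the Mellin transform $b^s\Gamma(s)L(2s-\nu,C)$ with $L(s,C)=\sum_{n\ge 1} C(n)n^{-s}$, and shifts the contour leftward. The mean-zero condition \eqref{eq:chi2} enters there as the statement that $L(s,C)$ is entire (the simple poles of the constituent Hurwitz zeta functions at $s=1$ cancel), and the coefficients $\gamma_n(\zeta)$ are then read off from the closed formula $L(-n,C)=-\tfrac{N^n}{n+1}\sum_{m=1}^N C(m)\,B_{n+1}(m/N)$. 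The paper's route yields the $a(m,n,N)$ explicitly in terms of Bernoulli polynomials with almost no bookkeeping, at the cost of invoking growth estimates for $L(s,C)$ in vertical strips; your route is more elementary (no contour integration), at the cost of heavier term-tracking. One small caution about your Mellin sketch as written: taking $\big(\tfrac{n^2-a}{b}\big)^{-s}$ as the base runs into trouble if $n^2=a$ for some $n$ with $\chi(n)\neq 0$, and the analytic continuation of $\sum n^\nu C(n)\big(\tfrac{n^2-a}{b}\big)^{-s}$ is not immediate from periodicity of $C$ alone---factoring out $e^{-at/b}$ first, as the paper does, sidesteps both issues.
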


We begin with a  lemma.  For $n\geq 0$ let  $B_n(x)$ denote the $n$th Bernoulli polynomial.
In the rest of this section we use $s$ for a complex variable since there can be no confusion with the parameter $s$ used above.
\begin{lemma} Let $C:\Z\to\C$ be a function with period $N$ and mean value zero, and let 
\[L(s, C):=\sum_{n=1}^\infty \frac{C(n)}{n^s},\qquad \re(s)>0.\]
Then 
$L(s, C)$ has an analytic continuation to $\C$, and we have 
\begin{equation}\label{eq:lvalues}
L(-n, C)=\frac{-N^n}{n+1}\sum_{m=1}^{N} C(m) B_{n+1}\pfrac mN\qquad\text{for $n\geq 0$}.
\end{equation}
\end{lemma}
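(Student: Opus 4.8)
The plan is to reduce $L(s,C)$ to a finite sum of Hurwitz zeta functions and then read off the continuation and the special values from the classical theory. First I would split the Dirichlet series according to residues modulo $N$: writing $n = Nk + m$ with $1 \le m \le N$ and $k \ge 0$, one gets
\begin{equation*}
L(s,C) = \sum_{m=1}^{N} C(m) \sum_{k \ge 0} \frac{1}{(Nk+m)^s} = N^{-s} \sum_{m=1}^{N} C(m)\, \zeta\!\left(s, \tfrac{m}{N}\right),
\end{equation*}
valid for $\re(s) > 1$, where $\zeta(s,x) = \sum_{k \ge 0}(k+x)^{-s}$ is the Hurwitz zeta function. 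Since each $\zeta(s,m/N)$ extends meromorphically to $\C$ with its only pole a simple pole at $s=1$ of residue $1$, the combination $\sum_{m=1}^N C(m)\,\zeta(s,m/N)$ has residue $\sum_{m=1}^N C(m) = 0$ at $s=1$ by the mean-value-zero hypothesis; hence $L(s,C)$ continues to an entire function of $s$. (One should note the series initially converges for $\re(s) > 1$, but the mean-zero condition also gives convergence for $\re(s) > 0$ by partial summation, consistent with the statement.)

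Next I would invoke the standard evaluation of the Hurwitz zeta function at non-positive integers,
\begin{equation*}
\zeta(-n, x) = -\frac{B_{n+1}(x)}{n+1}, \qquad n \ge 0,
\end{equation*}
which is itself proved, e.g., via the generating function $\sum_{n \ge 0} B_n(x) \frac{t^n}{n!} = \frac{t e^{xt}}{e^t - 1}$ together with the Hermite or Abel–Plana integral representation of $\zeta(s,x)$. Substituting into the finite-sum expression and using that $N^{-s}|_{s=-n} = N^{n}$, I get
\begin{equation*}
L(-n, C) = N^{n} \sum_{m=1}^{N} C(m) \left( -\frac{B_{n+1}(m/N)}{n+1} \right) = \frac{-N^n}{n+1} \sum_{m=1}^{N} C(m) B_{n+1}\!\left(\tfrac{m}{N}\right),
\end{equation*}
which is exactly \eqref{eq:lvalues}.

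There is no serious obstacle here; the lemma is a repackaging of classical facts. The only point requiring a little care is the bookkeeping of the region of convergence versus the region where the Hurwitz-zeta decomposition is literally valid as an identity of convergent series — one wants $\re(s) > 1$ for the term-by-term manipulation, then appeals to uniqueness of analytic continuation to propagate the identity (and hence the special values) to all of $\C$. One should also double-check the normalization convention for $B_{n+1}(x)$ so that the sign in $\zeta(-n,x) = -B_{n+1}(x)/(n+1)$ matches the paper's convention; with the generating function above this is the standard one, and it produces the stated formula.
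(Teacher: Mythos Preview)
Your proof is correct and is essentially the same as the paper's: both decompose $L(s,C)$ as $N^{-s}\sum_{m=1}^N C(m)\,\zeta(s,m/N)$, use the simple-pole structure of the Hurwitz zeta functions together with the mean-zero hypothesis to get the analytic continuation, and then invoke $\zeta(-n,x)=-B_{n+1}(x)/(n+1)$ (the paper cites Apostol, Thm.~12.13) to obtain the stated values. Your write-up is in fact a bit more detailed than the paper's, which simply records the Hurwitz decomposition and refers to Apostol for the rest.
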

\begin{proof}
Let $\zeta(s, \alpha)$ denote the Hurwitz zeta function, whose properties are described for example in \cite[Chapter~12]{AP}.
 We have
\begin{equation}\label{eq:hurwitz}
L(s, C)=N^{-s}\sum_{m=1}^N C(m)\zeta\(s, \tfrac mN\).
\end{equation}
The lemma follows using the fact that each Hurwitz zeta function has only a simple pole with residue $1$ at $s=1$ and
the  formula for the value of each function at $s=-n$ \cite[Thm. 12.13]{AP}.
\end{proof}

\begin{proof}[Proof of Proposition \ref{asym_prop}]
It is enough to prove the proposition for the function 
\[
f(t):= e^{-\frac{at}{b}} P_{a,b,\chi}^{(\nu)}( \zeta e^{-t} ) = \sum_{n \geq 1} n^\nu  \chi (n) \zeta^\frac{n^2 - a}b e^{-\frac{n^2 t }{b}},\qquad \text{ $t>0$}.
 \]
Setting
\begin{equation}\label{eq:cdef}
C(n):= \zeta^\frac{n^2 - a}b\chi (n),
\end{equation}
we have the Mellin transform
\[\int_0^\infty f(t) t^{s-1}\, dt=b^s\Gamma(s) L(2s-\nu, C),\qquad \re(s)>\frac12.\]
Inverting, we find that 
\[f(t)=\frac1{2\pi i}\int_{x=c} b^s\Gamma(s) L(2s-\nu, C)t^{-s}\, ds,\]
for  $c>\frac12$, 
where we write $s=x+i y$.
Using \eqref{eq:hurwitz}, the functional equation for the Hurwitz zeta functions, and
the asymptotics of the Gamma function,  we find that,  for fixed $x$, the function $L(s, C)$ has at most polynomial growth in $|y|$ as $|y|\to\infty$.   Shifting the contour to the line $x=-R-\frac12$  we find that for each $R\geq 0$ we have
\[f(t)=\sum_{n=0}^R\frac{(-1)^n}{b^n n!}L(-2n-\nu, C) t^n+O\(t^{R+\frac12}\),\]
from which
\[f(t)\sim\sum_{n=0}^\infty\frac{(-1)^n}{b^n n!}L(-2n-\nu, C) t^n.\]
The proposition follows from \eqref{eq:cdef} and \eqref{eq:lvalues}.
\end{proof}

\section{Proof of Theorems \ref{mainthm} and \ref{mainthm2}}
We begin with a lemma. The first assertion is   proved in  \cite[Lemma 2.4]{AS},
and the second, which is basically  equation (2.4) in \cite{AK}, follows by extracting an arithmetic progression
using orthogonality. (We note that there is an error in the published version of \cite{AK} which is corrected below;
in that version the operators $\frac d{dq}$ and $q\frac d{dq}$  are conflated in the statement of (2.3) and (2.4).  This does not affect the truth of the rest of the results.)

Let  $C_{\ell,i,j} (s)$ be the array of integers defined recursively as follows:
\begin{enumerate}
\item  $C_{0,0,0} (s) =1$,
\item $C_{\ell,i,0} (s) = i^\ell$ and $C_{\ell,i,j} (s) =0$ for $j \ge \ell+1$ or $j<0$,
\item $C_{\ell+1, i,j} (s) = (i + j s ) C_{\ell,i,j} (s) + s C_{\ell, i, j-1} (s)$ for $1\leq j\leq \ell$.
\end{enumerate}

\begin{lemma}
Suppose that $s$ is a positive integer and that 
\[h(q)=\sum_{i=0}^{s-1}q^i A_s(i, q^s)\]
with polynomials $A_s(i, q)$. Then the following are true:
\begin{enumerate}
\item For all $\ell\geq 0$ we have 
\begin{equation*}\label{diffrec}
\(q\frac{d}{dq}\)^\ell h(q)  =\sum_{j=0}^{\ell}  \sum_{i=0}^{s-1} C_{\ell,i,j} (s) q^{i+js} A_s^{(j)} (i, q^{s}).
\end{equation*}
\item Let $\zeta_s$ be a primitive $s$th root of unity.  Then for   $\ell\geq 0$ and    $i_0\in\{0, \dots, s-1\}$ we have 
\begin{equation}\label{importanteq}
\sum_{j=0}^{\ell}  C_{\ell,i_0,j} (s) q^{i_0+js} A_{s}^{(j)} (i_0 , q^{s})=\frac{1}{s} \sum_{k=0}^{s-1} \zeta_{s}^{-ki_0}   \(  \left( q \tfrac{d}{dq} \right)^\ell h(q) \)\Big|_{q\rightarrow\zeta_{s}^k q}.
\end{equation}
\end{enumerate}

\end{lemma}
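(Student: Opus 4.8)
The plan is to prove (1) by induction on $\ell$ and then to deduce (2) by the standard roots‑of‑unity filter. Both statements already appear in the literature---(1) is \cite[Lemma 2.4]{AS} and (2) is essentially equation (2.4) of \cite{AK}---so the argument below simply reproduces those proofs in the present notation.

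\textbf{Proof of (1).} The base case $\ell=0$ is immediate: by the initial conditions $C_{0,0,0}(s)=1$ and $C_{0,i,0}(s)=i^0=1$, together with $C_{0,i,j}(s)=0$ for $j\ge 1$, the right‑hand side collapses to $\sum_{i=0}^{s-1}q^iA_s(i,q^s)=h(q)$. For the inductive step, assume the formula at level $\ell$ and apply $q\tfrac{d}{dq}$ term by term. Using the chain rule $q\tfrac{d}{dq}\bigl(A_s^{(j)}(i,q^s)\bigr)=s\,q^sA_s^{(j+1)}(i,q^s)$ we get
\[
q\tfrac{d}{dq}\Bigl(q^{i+js}A_s^{(j)}(i,q^s)\Bigr)=(i+js)\,q^{i+js}A_s^{(j)}(i,q^s)+s\,q^{i+(j+1)s}A_s^{(j+1)}(i,q^s).
\]
Collecting contributions, for fixed $i$ the coefficient of $q^{i+js}A_s^{(j)}(i,q^s)$ in $\bigl(q\tfrac{d}{dq}\bigr)^{\ell+1}h(q)$ is $(i+js)C_{\ell,i,j}(s)+sC_{\ell,i,j-1}(s)$, which is exactly $C_{\ell+1,i,j}(s)$ by recursion~(3) for $1\le j\le\ell$. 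The boundary indices need a separate check: at $j=0$ only the first contribution survives, giving $i\cdot i^\ell=i^{\ell+1}=C_{\ell+1,i,0}(s)$ in agreement with condition~(2); at $j=\ell+1$ only the second contribution survives, giving $sC_{\ell,i,\ell}(s)$, which is the value produced by recursion~(3) under the convention $C_{\ell,i,\ell+1}(s)=0$. This completes the induction.

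\textbf{Proof of (2).} Fix $\ell$ and, for $0\le i\le s-1$, set
\[
B_i(q):=\sum_{j=0}^{\ell}C_{\ell,i,j}(s)\,q^{i+js}A_s^{(j)}(i,q^s)=q^i\sum_{j=0}^{\ell}C_{\ell,i,j}(s)\,q^{js}A_s^{(j)}(i,q^s),
\]
so that $B_i(q)$ is $q^i$ times a polynomial in $q^s$, and by part~(1) we have $\bigl(q\tfrac{d}{dq}\bigr)^\ell h(q)=\sum_{i=0}^{s-1}B_i(q)$; this is precisely the $s$‑dissection of $\bigl(q\tfrac{d}{dq}\bigr)^\ell h$. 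Since $(\zeta_s^k q)^s=q^s$, substituting $q\mapsto\zeta_s^k q$ multiplies $B_i$ by $\zeta_s^{ki}$, and therefore
\[
\frac{1}{s}\sum_{k=0}^{s-1}\zeta_s^{-ki_0}\Bigl(\bigl(q\tfrac{d}{dq}\bigr)^\ell h(q)\Bigr)\Big|_{q\to\zeta_s^kq}=\frac{1}{s}\sum_{i=0}^{s-1}B_i(q)\sum_{k=0}^{s-1}\zeta_s^{k(i-i_0)}=B_{i_0}(q),
\]
using orthogonality of characters on $\Z/s\Z$ (the inner sum is $s$ when $i=i_0$ and $0$ otherwise). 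Since $B_{i_0}(q)$ is exactly the left‑hand side of \eqref{importanteq}, this proves (2).

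I do not expect a genuine obstacle here; this is a warm‑up lemma. The only point requiring care is the bookkeeping of the boundary indices $j=0$ and $j=\ell+1$ in the inductive step of (1), where one must verify that the ad hoc values in condition~(2) of the definition of $C_{\ell,i,j}(s)$ are consistent with what recursion~(3) produces (with the natural convention that out‑of‑range entries vanish).
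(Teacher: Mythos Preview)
Your proof is correct and follows exactly the approach the paper points to: the paper does not give its own proof but cites \cite[Lemma~2.4]{AS} for (1) and remarks that (2) ``follows by extracting an arithmetic progression using orthogonality,'' and your argument reproduces precisely those computations. The only delicate point you flag---the boundary index $j=\ell+1$, where the recursion as stated in the paper (for $1\le j\le\ell$) does not literally apply---is handled correctly by your convention that out-of-range entries vanish.
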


\begin{proof}[Proof of Theorem \ref{mainthm}]
Suppose that $F(q)$ and $P_{ a, b, \chi}(q)$ are as in the statement of the theorem.
Suppose that $s$ and $k$ are positive integers, that $i\not\in S_{a,b, \chi}(s)$  and that $\zeta_k$ is a primitive  $k$th root of unity. 
Let  $\Phi_k (q)$ be the $k$th cyclotomic polynomial.
Recall the definition \eqref{eq:lambdadef} of $\lambda(N,s)$ and note that since
\begin{equation} \label{qfaccyclo}
(q;q)_n = \pm \prod_{k=1}^n \Phi_k(q)^{\lfloor \frac{n}{k} \rfloor}
\end{equation}
and
\begin{equation*}
\left \lfloor \frac{\lfloor \frac{x}{s} \rfloor} {k} \right \rfloor= \left \lfloor \frac{x}{ks} \right \rfloor,
\end{equation*}
we have 
\begin{equation*}
(q;q)_{\lambda(N, s) } = \pm \prod_{k=1}^{\lambda(N,s)} \Phi_{k}(q)^{\lambda(N,ks)}.
\end{equation*}
 Therefore, Theorem~\ref{mainthm} will  follow once we show for each $\ell\geq 0$ that 
\begin{equation*}
A_{F,s}^{(\ell)}(N, i, \zeta_k)=0 \ \ \  \text{for}\ \ \  N\geq (\ell+1) ks-1,
\end{equation*}
since this implies that $\Phi_k(q)^{\lambda(N, ks)} \mid A_{F,s} (N, i, q)$ for $1 \leq k \leq \lambda(N,s)$.

From the definition we find  that 
\begin{equation*}\label{basecase}
A_{F,s} (N, i, q ) = \sum_{j=0}^{k-1} q^{j} A_{F, ks} (N, i+js, q^k).
\end{equation*}
If  $i \not\in S_{a,b, \chi}(s)$, then    $i+js \not\in S_{a,b, \chi}(ks)$. It is therefore enough to show that for all $s$, $k$, and $\ell$, and for $i\not\in S_{a,b, \chi}(ks)$, we have
\begin{equation*}
A_{F, ks}^{(\ell)}( N, i , 1) = 0\qquad \text{for $N \geq (\ell+1) ks-1$}.
\end{equation*}
After replacing $ks$ by $s$, it is enough to show that 
for all $s$ and $\ell$, and for $i\not\in S_{a,b, \chi}(s)$, we have
\begin{equation}\label{needtoshowtwo}
A_{F, s}^{(\ell)}( N, i , 1) = 0\qquad \text{for $N \geq (\ell+1) s-1$}.
\end{equation}

We prove \eqref{needtoshowtwo} by induction on $\ell$.  
For the base case $\ell=0$, assume that $N\geq s-1$.
Using \eqref{importanteq} with 
 $q=1$ gives 
\begin{equation*}
 A_{F, s} (N, i , 1)=\frac{1}{s} \sum_{j=0}^{s-1} \zeta_{s}^{-ji}  F(\zeta_s^j; N).
\end{equation*}
By \eqref{F_asym_agree}, \eqref{eq:taylorswap}, Lemma \ref{stablelemma}, and Proposition~\ref{asym_prop} we find that
\[
A_{F, s}(N, i, 1 )=\frac1s\sum_{j=1}^{s} \zeta_{s}^{-ji} \gamma_0(\zeta_{s}^{j}).
\]
By \eqref{eq:gamma_form} and orthogonality (recalling that $i\not\in S_{a,b, \chi}(s)$), we find that $A_{F, s}(N, i, 1 )=0$.

For the induction step, suppose that $N\geq (\ell+1) s-1$, that $i\not\in S_{a,b, \chi}(s)$, and that  \eqref{needtoshowtwo} holds with 
$\ell$ replaced by $j$ for $1 \leq j\leq \ell-1$.
By \eqref{importanteq} and the induction hypothesis we have
\[C_{\ell, i, \ell}(s)A_{F, s}^{(\ell)}(N, i, 1)=
\frac1s \sum_{j=1}^s \zeta_s^{-ji}  \(q\frac d{dq}\)^\ell F (q; N)\big|_{q=\zeta_s^j} .
\]
Using Proposition~\ref{asym_prop}, \eqref{eq:formaldiff},   \eqref{eq:gamma_form}, and orthogonality, we find as above that
\[C_{\ell, i, \ell}(t)A_{F, s}^{(\ell)}(N, i, 1)=0.\]
This establishes \eqref{needtoshowtwo} since $C_{\ell, i, \ell}(s)>0$.  
Theorem~\ref{mainthm} follows.
\end{proof}

\begin{proof}[Proof of Theorem \ref{mainthm2}]
Suppose that $s$ and $k$ are positive integers with $s$ odd, that $i\not\in S_{a,b, \chi}(s)$  and that $\zeta_{2k-1}$ is a   $(2k-1)$th root of unity. Recall the definition \eqref{mudefn} of $\mu(N,k,s)$.  In analogy with  \eqref{qfaccyclo}, we have
\begin{equation*} 
(q;q^2)_n = \pm \prod_{k=1}^n \Phi_{2k-1}(q)^{\lfloor \frac{(2n-1)}{2(2k-1)} + \frac{1}{2} \rfloor},
\end{equation*} 
and as above we obtain
\[
(q;q^2)_{\mu(N,1,s)} = \pm \prod_{k=1 }^{\mu(N,1,s)} \Phi_{2k-1} (q)^{\mu(N,k,s)}.
\]
Therefore, Theorem \ref{mainthm2} follows once we show for each $\ell\geq 0$ that 
\begin{equation*}\label{needtoshow}
A_{G,s}^{(\ell)}(N, i, \zeta_{2k-1})=0 \ \ \  \text{for}\ \ \  2N\geq (2\ell+1) (2k-1)s . 
\end{equation*}

The rest of the proof is similar to that of Theorem \ref{mainthm} (we require $s$ to be odd because  $G(q)$ has a  series expansion only at odd-order roots of unity).
Arguing as above, we show that for each odd $s$ we have 
\[
A_{G,s}^{(\ell)}(N, i, 1)=0 \ \ \  \text{for}\ \ \  2N\geq (2\ell+1)s,
\]
and the result follows.
\end{proof}

\section{Examples}\label{sec:ex}

In this section we illustrate Theorems \ref{mainthm} and \ref{mainthm2} with two families of examples.
\subsection{The generalized Kontsevich-Zagier functions}
In a study of quantum modular forms related to torus knots and the Andrews-Gordon identities, Hikami \cite{Hikami1} defined the functions
\begin{equation}\label{eq:hikami}
X_{m}^{(\alpha)} (q) :=
 \sum_{k_1, k_2,\ldots,k_m \geq 0} (q;q)_{k_m} q^{k_{1}^{2} + \cdots + k_{m-1}^{2} + k_{\alpha+1} + \cdots + k_{m-1}}
\( \prod_{\substack{i=1 \\ i \neq \alpha}}^{m-1} {k_{i+1} \brack k_i} \) {k_{\alpha+1} +1 \brack k_\alpha },
\end{equation}
where $m$ is a positive integer and $\alpha \in \{0,1,\ldots, m-1\}$.  Here we have used the usual $q$-binomial coefficient (or Gaussian polynomial)
\begin{equation*}
{n \brack k} := {n \brack k}_q :=
\begin{cases}
\frac{(q;q)_{n}}{(q;q)_{k}(q;q)_{n-k}}, & \text{if $0 \leq k \leq n$},\\ 
0, & \text{otherwise}.
\end{cases}
\end{equation*} 
The simplest example
\begin{equation*}
X_{1}^{(0)} (q)= \sum_{n \geq 0} (q;q)_{n}
\end{equation*} 
is the Kontsevich-Zagier function. 
From \eqref{eq:hikami} we can write  
\begin{equation*}
X_{m}^{(\alpha)} (q) = \sum_{k_m \geq 0} (q;q)_{k_m} f_{k_m}^{(\alpha)} (q),
\end{equation*}
with polynomials $f_{k_m}^{(\alpha)} (q)$.

Hikami's identity \cite[eqn (70)]{Hikami1} implies that for each root of unity $\zeta$ we have
\[
P_{(2m-2\alpha-1)^2 , 8(2m+1) , \chi_{8m+4}^{(\alpha)}}^{(1)} (\zeta e^{-t}) \sim  X_{m}^{(\alpha)} (\zeta e^{-t})
\]
as $t \to 0^{+}$, where $\chi_{8m+4}^{(\alpha)} (n)$ is defined by
\begin{equation}\label{eq:chimdef}
\chi_{8m+4}^{(\alpha)} (n) =
\begin{cases}
-1/2, &\text{if $n \equiv 2m-2\alpha-1$ or $6m+2\alpha+5 \pmod{8m+4}$,} \\
1/2, &\text{if $n \equiv 2m+2\alpha+3$ or $6m-2\alpha+1 \pmod{8m+4}$,} \\
0, &\text{otherwise.}
\end{cases}
\end{equation}
The function  $\chi_{8m+4}^{(\alpha)} (n)$ satisfies condition \eqref{eq:chi1}.
For  
 \eqref{eq:chi2} we record a short lemma.
\begin{lemma}\label{lem:genchar}
Suppose that $\chi_{8m+4}^{(\alpha)} (n)$ is as defined in \eqref{eq:chimdef} and that $\zeta$ is a root of unity of order $M$.
Define
\[\psi(n)=\zeta^\frac{n^2-(2m-2\alpha-1)^2}{8(2m+1)}\chi_{8m+4}^{(\alpha)} (n).\]
Then 
\[\sum_{n=1}^{M(8m+4)}\psi(n)=0.\]
\end{lemma}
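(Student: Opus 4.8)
The plan is to pair up the summands by an explicit sign‑reversing map. Write $a=(2m-2\alpha-1)^2$ and $b=8(2m+1)=4(4m+2)$, so that $\psi(n)=\zeta^{(n^2-a)/b}\,\chi_{8m+4}^{(\alpha)}(n)$, where by \eqref{eq:chi1} the exponent $(n^2-a)/b$ is an integer whenever $\chi_{8m+4}^{(\alpha)}(n)\neq 0$ and the term is understood to vanish otherwise. The four residues modulo $8m+4$ on which $\chi_{8m+4}^{(\alpha)}$ is nonzero are all odd, so $\psi$ is supported on odd integers.

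The first step is to record the antisymmetry of $\chi_{8m+4}^{(\alpha)}$: inspecting \eqref{eq:chimdef} one checks that translation by $4m+2$ interchanges the pair of residues on which $\chi_{8m+4}^{(\alpha)}=-\tfrac12$ with the pair on which $\chi_{8m+4}^{(\alpha)}=\tfrac12$ (explicitly $2m-2\alpha-1\leftrightarrow 6m-2\alpha+1$ and $6m+2\alpha+5\leftrightarrow 2m+2\alpha+3$ modulo $8m+4$), and hence
\[
\chi_{8m+4}^{(\alpha)}(n+4m+2)=-\chi_{8m+4}^{(\alpha)}(n)\ \ \text{for all }n,\qquad\text{so}\qquad \chi_{8m+4}^{(\alpha)}\bigl(n+M(4m+2)\bigr)=(-1)^M\chi_{8m+4}^{(\alpha)}(n).
\]
The second step is the elementary identity, obtained from $b=4(4m+2)$ and $4m+2=2(2m+1)$,
\[
\frac{\bigl(n+M(4m+2)\bigr)^2-a}{b}-\frac{n^2-a}{b}=\frac{2M(4m+2)n+M^2(4m+2)^2}{b}=\frac{M\bigl(n+M(2m+1)\bigr)}{2}.
\]
The third step is to show that $\zeta^{M(n+M(2m+1))/2}=(-1)^{M+1}$ whenever $n$ is odd. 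If $M$ is even this quantity equals $\bigl(\zeta^{M/2}\bigr)^{n+M(2m+1)}=(-1)^{n+M(2m+1)}=-1$, since $\zeta^{M/2}=-1$ and $M(2m+1)$ is even; if $M$ is odd, then $n+M(2m+1)$ is even, so the exponent is an integer multiple of $M$ and the quantity equals $1$. Combining the three steps, for every odd $n$ we get
\[
\psi\bigl(n+M(4m+2)\bigr)=\zeta^{(n^2-a)/b}\,\zeta^{M(n+M(2m+1))/2}\,(-1)^M\,\chi_{8m+4}^{(\alpha)}(n)=(-1)^{2M+1}\psi(n)=-\psi(n),
\]
while for even $n$ both sides vanish; hence $\psi(n+M(4m+2))=-\psi(n)$ for all $n\in\Z$.

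To finish, since $M(8m+4)=2M(4m+2)$, I would split the range of summation into its first and second halves and pair $n$ with $n+M(4m+2)$:
\[
\sum_{n=1}^{M(8m+4)}\psi(n)=\sum_{n=1}^{M(4m+2)}\Bigl(\psi(n)+\psi\bigl(n+M(4m+2)\bigr)\Bigr)=0.
\]
(A similar but simpler computation gives $\psi(n+M(8m+4))=\psi(n)$, so $\psi$ has period dividing $M(8m+4)$ and the lemma yields the mean‑value‑zero half of \eqref{eq:chi2}.) The only genuine subtlety is identifying the correct shift: one must translate by $M(4m+2)$, i.e.\ by half the full period $M(8m+4)$, rather than by the period $8m+4$ of $\chi_{8m+4}^{(\alpha)}$ or by its antiperiod $4m+2$; this is exactly the choice for which the character sign $(-1)^M$ and the quadratic‑exponent sign $(-1)^{M+1}$ multiply to $-1$ uniformly in $M$. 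Everything else is a short mechanical verification.
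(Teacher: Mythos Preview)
Your proof is correct and follows essentially the same approach as the paper: you establish $\psi(n+M(4m+2))=-\psi(n)$ by combining the sign $(-1)^M$ from the shift of $\chi_{8m+4}^{(\alpha)}$ with the sign $(-1)^{M+1}$ from the quadratic exponent, treating $M$ odd and $M$ even separately, and then pair the two halves of the sum. Your write-up is a bit more explicit (you spell out the antisymmetry $\chi_{8m+4}^{(\alpha)}(n+4m+2)=-\chi_{8m+4}^{(\alpha)}(n)$ and the final pairing), but the argument is the same.
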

\begin{proof}
Note that $\psi$ is supported on odd integers, so we assume in what follows that $n$ is odd. From the definition, we have
\begin{equation}\label{eq:chisign}
\chi_{8m+4}^{(\alpha)}(n+M(4m+2))=(-1)^M\chi_{8m+4}^{(\alpha)}(n).
\end{equation}
The exponent in the ratio of the corresponding powers of $\zeta$ is $mM^2+\frac{M^2+Mn}2$.
So the ratio of these powers of $\zeta$ is 
\[\zeta^\frac{M^2+Mn}2.\]
If $M$ is odd then this becomes $\zeta^{M\pfrac{M+n}2}=1$, while if $M$ is even then this 
becomes $\zeta^\frac{M^2}2\zeta^{\frac M2 n}=-1$ (since $M$ is the order of $\zeta$ and $n$ is odd).
Therefore the ratio in either case is $(-1)^{M+1}$.  Combining this with \eqref{eq:chisign} gives 
\[\psi(n+M(4m+2))=-\psi(n),\]
from which the lemma follows.
\end{proof}

  Therefore $X_{m}^{(\alpha)} (q)$ satisfies the conditions of Theorem \ref{mainthm}, and we obtain the following.
\begin{corollary} \label{Hikami_div}
If $s$ is a positive integer and  $i \not\in S_{(2m-2\alpha-1)^2 , 8(2m+1) , \chi_{8m+4}^{(\alpha)}} (s)$, then
 \[(q;q)_{\lambda(N, s)}\big |  A_{X_{m}^{(\alpha)}, s} (N, i, q),  \]
where $A_{X_{m}^{(\alpha)}, s} (N, i, q)$ are the coefficients in the  $s$-dissection of the partial sums (in $k_m$) of $X_{m}^{(\alpha)}(q)$. 
\end{corollary}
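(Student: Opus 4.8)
The plan is to verify that the hypotheses of Theorem~\ref{mainthm} are met by the pair $\bigl(X_m^{(\alpha)}, P_{(2m-2\alpha-1)^2,\,8(2m+1),\,\chi_{8m+4}^{(\alpha)}}^{(1)}\bigr)$, after which the corollary is immediate. First I would observe that $X_m^{(\alpha)}(q)$ has the shape \eqref{Habiro_form}: from \eqref{eq:hikami}, summing over $k_1,\dots,k_{m-1}$ for fixed $k_m$ produces a polynomial $f_{k_m}^{(\alpha)}(q)$, since each $q$-binomial coefficient ${k_{i+1}\brack k_i}$ is a polynomial and the inner sum is finite (the summand vanishes unless $k_i\le k_{i+1}$ for $i\ne\alpha$ and $k_\alpha\le k_{\alpha+1}+1$). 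Thus $X_m^{(\alpha)}(q)=\sum_{k_m\ge 0}(q;q)_{k_m}f_{k_m}^{(\alpha)}(q)$ lies in the Habiro ring and the machinery of Section~2 applies to it.

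Next I would check that $\chi := \chi_{8m+4}^{(\alpha)}$ satisfies \eqref{eq:chi1} and \eqref{eq:chi2}. Condition \eqref{eq:chi1} is the statement that $\chi(n)\ne 0$ forces $\frac{n^2-(2m-2\alpha-1)^2}{8(2m+1)}\in\Z$; this is a direct congruence check on the four residue classes modulo $8m+4$ appearing in \eqref{eq:chimdef}, and is asserted in the text. For \eqref{eq:chi2}, fix a root of unity $\zeta$ of order $M$ and set $\psi(n)=\zeta^{(n^2-(2m-2\alpha-1)^2)/(8(2m+1))}\chi(n)$; this is exactly the function appearing in Lemma~\ref{lem:genchar}. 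That lemma shows $\psi$ is periodic (with period dividing $M(8m+4)$, since it is periodic modulo $8m+4$ after the $\zeta$-twist is accounted for, and the twist has period dividing $M(8m+4)$) and that $\psi(n+M(4m+2))=-\psi(n)$, which forces $\sum_{n=1}^{M(8m+4)}\psi(n)=0$, i.e.\ mean value zero. So \eqref{eq:chi2} holds. I would note in passing that since $\chi_{8m+4}^{(\alpha)}$ is in fact an odd periodic function (the pair of classes with value $-1/2$ is the negative of the pair with value $1/2$ modulo $8m+4$), the general remark after \eqref{eq:chi2} already delivers this, but invoking Lemma~\ref{lem:genchar} keeps the argument self-contained.

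Third, I would record the asymptotic agreement \eqref{F_asym_agree}: this is precisely Hikami's identity \cite[eqn (70)]{Hikami1}, which states that $X_m^{(\alpha)}(\zeta e^{-t})$ and $P_{(2m-2\alpha-1)^2,\,8(2m+1),\,\chi_{8m+4}^{(\alpha)}}^{(1)}(\zeta e^{-t})$ have the same asymptotic expansion as $t\to 0^+$ at every root of unity $\zeta$. With \eqref{Habiro_form}, \eqref{eq:chi1}, \eqref{eq:chi2}, and \eqref{F_asym_agree} all in hand, Theorem~\ref{mainthm} applies verbatim and yields $(q;q)_{\lambda(N,s)}\mid A_{X_m^{(\alpha)},s}(N,i,q)$ for every $i\notin S_{(2m-2\alpha-1)^2,\,8(2m+1),\,\chi_{8m+4}^{(\alpha)}}(s)$, which is the assertion of Corollary~\ref{Hikami_div}.

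The only real obstacle is the translation step: confirming that Hikami's formula \cite[eqn (70)]{Hikami1} really has the form $P_{a,b,\chi}^{(1)}(\zeta e^{-t})\sim X_m^{(\alpha)}(\zeta e^{-t})$ with exactly the parameters $a=(2m-2\alpha-1)^2$, $b=8(2m+1)$, and $\chi=\chi_{8m+4}^{(\alpha)}$ as written in \eqref{eq:chimdef}. This requires matching Hikami's partial-theta presentation (typically written as a sum over a single index with a shifted square in the exponent and a sign pattern coming from the odd character modulo $2(2m+1)$ or similar) against \eqref{eq:Pdef} and reindexing so that the support and the exponent $\frac{n^2-a}{b}$ line up with the four residue classes in \eqref{eq:chimdef}. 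This is a bookkeeping computation rather than a conceptual difficulty, and once it is done the $\nu=1$ case of the asymptotic expansion in Proposition~\ref{asym_prop} (the extra factor of $n$ in \eqref{eq:Pdef}) is already fully accommodated by the proofs in Section~4, which never used $\nu=0$.
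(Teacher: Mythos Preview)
Your approach is essentially identical to the paper's: verify the Habiro form, cite Hikami's identity for the asymptotic agreement, invoke Lemma~\ref{lem:genchar} for condition~\eqref{eq:chi2}, and apply Theorem~\ref{mainthm}. One correction to your aside: $\chi_{8m+4}^{(\alpha)}$ is \emph{even}, not odd, since $(2m-2\alpha-1)+(6m+2\alpha+5)=8m+4$ and $(2m+2\alpha+3)+(6m-2\alpha+1)=8m+4$, so each pair of residue classes with a common value is already closed under negation; this is precisely why Lemma~\ref{lem:genchar} is needed and the remark after~\eqref{eq:chi2} does not apply.
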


For example, when $s=3$ we have $S_{9,40,\chi_{20}^{(0)}}(3) = \{0,1\}$ and $S_{1,40,\chi_{20}^{(1)}}(3) = \{0,2\}$. For $N=8$ we have
\begin{align*}
A_{X_{2}^{(0)},3} (8, 2, q) &= (q;q)_3 (1+q)(1+q+q^2)(1-q+\cdots -q^{25}+q^{26}) \\
\intertext{and}
A_{X_{2}^{(1)},3} (8, 1, q) &= (q;q)_3 (1+q)(1-q+q^2)(1+q+q^2)(1+2q+\cdots -q^{26}+q^{27}),
\end{align*}
as predicted by Corollary~\ref{Hikami_div}, while 
\begin{align*}
A_{X_{2}^{(0)},3} (8, 0, q) &=(1-q+q^2)(9+9q+\cdots+q^{33}+q^{34}), \\
A_{X_{2}^{(0)},3} (8, 1, q) &=-8-7q+\cdots + q^{34} - q^{35}, \\
A_{X_{2}^{(1)},3} (8, 0, q) &=9-7q+\cdots+2q^{36}+q^{39}, \\
\intertext{and}
A_{X_{2}^{(1)},3} (8, 2, q) &= -7+3q^3 -\cdots +q^{36}-q^{38}\\
\end{align*}
are not divisible by $(q;q)_{3}$.

\subsection{An example with $\nu=0$}
For $k\geq 1$
let $\mathcal{G}_k(q)$ denote the $q$-series
\begin{equation*}
\mathcal{G}_k(q) = \sum_{n_k \geq  n_{k-1} \geq \cdots \geq n_1 \geq 0}q^{n_k+2n_{k-1}^2+2n_{k-1} + \cdots + 2n_1^2 + 2n_1}(q;q^2)_{n_k}\begin{bmatrix} n_k \\ n_{k-1} \end{bmatrix} _{q^2} \cdots \begin{bmatrix} n_2 \\ n_1 \end{bmatrix} _{q^2}.
\end{equation*}
Then we have the identity
\begin{equation} \label{G_k1}
\mathcal{G}_k(q) = \sum_{n \geq 0} (-1)^nq^{(2k+1)n^2+2kn}(1+q^{2n+1}),
\end{equation}
which follows from Andrews' generalization \cite{And} of the Watson-Whipple transformation 
\begin{equation*} \label{Andrews}
\begin{aligned}
\sum_{m = 0}^N & \frac{(1-aq^{2m})}{(1-a)}\frac{(a,b_1,c_1,\dots,b_k,c_k,q^{-N})_m}{(q,aq/{b_1},aq/{c_1},\dots,aq/{b_k},aq/{c_k},aq^{N+1})_m}\left(\frac{a^kq^{k+N}}{b_1c_1\cdots b_kc_k}\right)^m \\
&= \frac{(aq,aq/{b_kc_k})_N}{(aq/{b_k},aq/{c_k})_N} \sum_{N \geq n_{k-1}\geq \cdots \geq n_1 \geq 0} \frac{(b_k,c_k)_{n_{k-1}}\cdots(b_2,c_2)_{n_1}}{(q;q)_{n_{k-1}-n_{k-2}}\cdots (q;q)_{n_2-n_1}(q;q)_{n_1}} \\
&\phantom{\times} \hskip1in \qquad \times \frac{(aq/b_{k-1}c_{k-1})_{n_{k-1}-n_{k-2}}\cdots(aq/b_2c_2)_{n_2-n_1}(aq/b_1c_1)_{n_1}}{(aq/b_{k-1},aq/c_{k-1})_{n_{k-1}}\cdots(aq/b_1,aq/c_1)_{n_1}} \\
&\qquad \qquad \qquad \qquad \qquad \qquad \times \frac{(q^{-N})_{n_{k-1}}(aq)^{n_{k-2}+\cdots+n_1}q^{n_{k-1}}}{(b_kc_kq^{-N}/a)_{n_{k-1}} (b_{k-1}c_{k-1})^{n_{k-2}}\cdots (b_2c_2)^{n_1}}.
\end{aligned}
\end{equation*}
Here we have extended the notation in \eqref{qfac} to
\begin{equation*}
(a_1,a_2,\dots, a_k)_n := (a_1;q)_n(a_2;q)_n \cdots (a_k;q)_n.
\end{equation*}
To deduce \eqref{G_k1}, we set $q=q^2,a=q^2, b_k=q$, and $c_k = q^2$ and then let $N \to \infty$ along with all other $b_i,c_i$.

The identity \eqref{G_k1} may be written as
\begin{equation*}
\mathcal{G}_k(q) = \sum_{n \geq 0} \chi_{4k+2}(n)q^{\frac{n^2-k^2}{2k+1}},
\end{equation*}
where 
\begin{equation*}
\chi_{4k+2}(n) :=
\begin{cases}
1, &\text{if $n \equiv k,k+1 \pmod{4k+2}$}, \\
-1, &\text{if $n \equiv -k,-k-1 \pmod{4k+2}$}, \\
0, &\text{otherwise}.
\end{cases}
\end{equation*}
This implies that for each odd-order root of unity $\zeta$, we have
\[
P_{k^2,2k+1, \chi_{4k+2}}^{(0)} (\zeta e^{-t} ) \sim G_{k} (\zeta e^{-t}) \qquad\text{as $t \to 0^+$}.
\]
The function $\chi_{4k+2}(n)$ satisfies   conditions \eqref{eq:chi1} and \eqref{eq:chi2}
(see the remark following \eqref{eq:chi2}),
so    Theorem \ref{mainthm2} gives
\begin{corollary}
Suppose that $k$ and $N$ are positive integers, that $s$ is  a positive odd integer, and that $i \not\in S_{k^2,2k+1, \chi_{4k+2}} (s) $.
Then
\[
(q;q^2)_{\lfloor \frac Ns+\frac12 \rfloor} \mid A_{\mathcal{G}_k, s} (N, i, q).
\]
\end{corollary}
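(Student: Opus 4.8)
The plan is to verify the hypotheses of Theorem~\ref{mainthm2} for the series $\mathcal{G}_k(q)$ and the partial theta function $P_{k^2,2k+1,\chi_{4k+2}}^{(0)}$, after which the corollary is immediate. The three things to check are: (i) that $\mathcal{G}_k(q)$ has the form \eqref{oHabiro_form}, i.e.\ can be written as $\sum_{n_k\geq 0}(q;q^2)_{n_k}g_{n_k}(q)$ with polynomials $g_{n_k}(q)$; (ii) that $\chi_{4k+2}$ satisfies conditions \eqref{eq:chi1} and \eqref{eq:chi2}; and (iii) that the asymptotic matching $P_{k^2,2k+1,\chi_{4k+2}}^{(0)}(\zeta e^{-t})\sim\mathcal{G}_k(\zeta e^{-t})$ holds for every odd-order root of unity $\zeta$.

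For (i), I would collect all the summation variables $n_1,\dots,n_{k-1}$ (which are bounded by $n_k$) together with the $q^2$-binomial coefficients and the powers $q^{2n_{k-1}^2+2n_{k-1}+\cdots+2n_1^2+2n_1}$ into a single polynomial in $q$ depending on $n_k$; since each $q$-binomial $\begin{bmatrix} n_{j+1}\\ n_j\end{bmatrix}_{q^2}$ is a polynomial and the inner sum is finite, the coefficient $g_{n_k}(q)$ of $(q;q^2)_{n_k}q^{n_k}$ is a polynomial, so $\mathcal{G}_k$ lies in the ``odd Habiro ring'' as required. For (ii), condition \eqref{eq:chi1} holds because $\chi_{4k+2}(n)\neq 0$ forces $n\equiv\pm k,\pm(k+1)\pmod{4k+2}$, and in each such case one checks directly that $n^2\equiv k^2\pmod{2k+1}$ so that $(n^2-k^2)/(2k+1)\in\Z$; condition \eqref{eq:chi2} follows from the remark after \eqref{eq:chi2} once we observe $\chi_{4k+2}$ is an odd periodic function, since $\chi_{4k+2}(-n)=-\chi_{4k+2}(n)$ is visible from the definition (the residues $k,k+1$ are sent to $-k,-k-1$).

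For (iii), the substance is the identity \eqref{G_k1}, which I would establish as the excerpt indicates: apply Andrews' $k$-fold generalization of the Watson--Whipple transformation with the specialization $q\mapsto q^2$, $a=q^2$, $b_k=q$, $c_k=q^2$, and let $N\to\infty$ together with the remaining $b_i,c_i\to\infty$. The left-hand side then collapses to $\mathcal{G}_k(q)$ (one must track that the multisum on the right of Andrews' identity becomes precisely the defining multisum of $\mathcal{G}_k$ after these substitutions, using $\lim_{b\to\infty}(b)_n/b^n=(-1)^nq^{\binom n2}$ to produce the quadratic exponents and signs), while the left-hand side, a terminating very-well-poised series, yields the partial theta sum $\sum_{n\geq 0}(-1)^nq^{(2k+1)n^2+2kn}(1+q^{2n+1})$ in the limit. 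Rewriting the exponent $(2k+1)n^2+2kn$ as $((2n)^2-k^2)/(2k+1)\cdot\frac{?}{}$—more precisely, noting that for $n\geq 0$ the two terms $q^{(2k+1)n^2+2kn}$ and $q^{(2k+1)n^2+2kn+2n+1}$ have exponents equal to $(m^2-k^2)/(2k+1)$ for $m\equiv -k$ and $m\equiv -k-1\pmod{4k+2}$ respectively (and the $n<0$ tail supplies the $m\equiv k,k+1$ residues with the opposite sign convention)—recasts \eqref{G_k1} in the form $\sum_n\chi_{4k+2}(n)q^{(n^2-k^2)/(2k+1)}$. Since an honest $q$-series identity remains valid when both sides are expanded at any root of unity where both are defined, and partial theta functions agree with their own asymptotic expansions by Proposition~\ref{asym_prop}, the asymptotic relation \eqref{F_asym_agree} (in the $G$ form) follows at every odd-order $\zeta$.

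The main obstacle is bookkeeping in step (iii): carefully carrying out the $N\to\infty$ and $b_i,c_i\to\infty$ limits in Andrews' transformation so that the right-hand multisum matches the definition of $\mathcal{G}_k$ term by term—in particular matching the quadratic exponents $2n_j^2+2n_j$, the shift of $q$ to $q^2$ in the binomial coefficients, and the single factor $(q;q^2)_{n_k}q^{n_k}$ coming from the $b_k=q$, $c_k=q^2$ choice—and verifying that the very-well-poised left side telescopes to the stated partial theta function. Once \eqref{G_k1} is in hand, everything else is a direct application of Theorem~\ref{mainthm2} with $a=k^2$, $b=2k+1$, $\chi=\chi_{4k+2}$, using $\mu(N,1,s)=\lfloor N/s+\tfrac12\rfloor$.
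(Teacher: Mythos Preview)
Your proposal is correct and follows exactly the paper's approach: verify that $\mathcal{G}_k$ has the form \eqref{oHabiro_form}, that $\chi_{4k+2}$ is odd (hence satisfies \eqref{eq:chi1}--\eqref{eq:chi2} via the remark after \eqref{eq:chi2}), and that the identity \eqref{G_k1} obtained from Andrews' Watson--Whipple specialization yields the required asymptotic agreement with $P_{k^2,2k+1,\chi_{4k+2}}^{(0)}$, then invoke Theorem~\ref{mainthm2} with $\mu(N,1,s)=\lfloor N/s+\tfrac12\rfloor$. The only blemish is the parenthetical about an ``$n<0$ tail'' in your rewriting of the partial theta sum---in fact all four residue classes $\pm k,\pm(k+1)\pmod{4k+2}$ already arise from $n\geq 0$ via $m=(2k+1)n+k$ and $m=(2k+1)n+(k+1)$ (with $\chi_{4k+2}(m)=(-1)^n$ in each case)---but this is cosmetic and does not affect the argument.
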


\section{Remarks on congruences} \label{sec:cong}
Congruences for the coefficients of the functions $F(q)$ and $G(q)$ in Theorems~\ref{mainthm} and \ref{mainthm2} can be deduced from the results of \cite{GKR}.
In closing we mention another approach. Theorems~\ref{mainthm} and \ref{mainthm2} guarantee that many of the coefficients in the $s$-dissection are divisible by high powers of $1-q$,
and the congruences follow from this fact when $s=p^r$ together  with an argument as in \cite[Section 3]{AK}.

For example, let $\mathcal G_k$ be the function defined in the last section and 
 define $\xi_{\mathcal{G}_k}(n)$ by
\begin{equation*}
\mathcal{G}_k (1-q) = \sum_{n \geq 0} \xi_{\mathcal{G}_k} (n) q^n.
\end{equation*}
Consider the expansions 
\begin{align*}
\mathcal{G}_1(1-q) &= \sum_{n \geq 0} \xi_{\mathcal{G}_1}(n)q^n = 1 + q+ 2q^2 + 6q^3 + 25q^4 + 135q^5 + \cdots, \\
\mathcal{G}_2 (1-q) &= \sum_{n \geq 0} \xi_{\mathcal{G}_2} (n) q^n = 1 + 2 q + 6 q^2 + 28 q^3 + 189 q^4 + 1680 q^5 + \cdots.
\end{align*}
Then we have such congruences as
\begin{align*}
\xi_{\mathcal{G}_1}(5^r n-1) &\equiv 0 \pmod{5^r}, \\
\xi_{\mathcal{G}_1}(7^rn-1) &\equiv 0 \pmod{7^r}, \\
\xi_{\mathcal{G}_1}(13^rn-\beta) &\equiv 0 \pmod{13^r}
\end{align*}
for $\beta \in \{1, 2, 3, 4\}$, and 
\begin{align*}
\xi_{\mathcal{G}_2} (7^rn-1) &\equiv 0 \pmod{7^r}, \\
\xi_{\mathcal{G}_2} (11^rn-1) &\equiv 0 \pmod{11^r}.
\end{align*}

\end{document}